\newtheorem{thm}{Theorem}[section]
\newtheorem{cor}[thm]{Corollary}
\newtheorem{prop}[thm]{Proposition}
\newtheorem{lem}[thm]{Lemma}
\theoremstyle{definition}
\theoremstyle{remark}
\newtheorem{rem}[thm]{Remark}
\DeclareMathOperator{\stab}{stab}
\DeclareMathOperator{\cochar}{cochar}
\DeclareMathOperator{\Leaf}{Leaf}
\DeclareMathOperator{\Slope}{Slope}
\DeclareMathOperator{\supp}{supp}
\title{Restriction formula for stable basis of the Springer resolution}
\author{Changjian Su}
\date{}
\address{Department of Mathematics\\
  Columbia University\\
  2990 Broadway\\
  New York, NY 10027}
\email{changjian@math.columbia.edu}
\begin{document}
\maketitle

\begin{abstract}

We give restriction formula for stable basis of the Springer resolution, and generalize it to cotangent bundles of homogeneous spaces. By a limiting process, we get the restriction formula of Schubert varieties.

\end{abstract}

\section{Introduction}
In \cite{Maulik2012}, Maulik and Okounkov defined the stable basis for a wide class of varieties. In this paper, we apply their general construction to the special case of the Springer resolution and study the restriction of the stable basis to fixed points. 

To state our main result, let us fix some notations. Let $G$ be a complex semisimple algebraic group, $B$ be a Borel subgroup, and $\mathcal{B}$ be its associated flag variety. Let $P$ be a parabolic subgroup containing $B$, and $\mathcal{P}=G/P$ be the partial flag variety. Let $R^+$ be those roots in $B$, and $A$ be a maximal torus of $G$ contained in $B$. Let $\mathbb{C}^*$ dilate the fiber of $T^*\mathcal{B}$ by a nontrivial character $-\hbar$, and $T=A\times \mathbb{C}^*$. It is well-known that the fixed point set $(T^*\mathcal{B})^A$ is in one-to-one correspondence with the Weyl group $W$ of $G$. Let $wB$ denote the fixed point corresponding to $w$. The stable envelope $\stab_\pm(y)\in H_T^*(T^*\mathcal{B})$ will be defined in Section \ref{section 2}. Here, $y$ means the unit in $H_T^*(yB)$, and $+$ is some chamber in the cocharacter lattice. The main theorems are
\begin{thm}\label{restriction -B}
Let $y=\sigma_1\sigma_2\cdots\sigma_l$ be a reduced expression for $y\in W$. Then
\begin{equation}\label{formula restriction -B}
\stab_-(w)|_y=(-1)^{l(y)}\prod\limits_{\alpha\in R^+\setminus R(y)}(\alpha-\hbar)\sum\limits_{\substack{1\leq i_1<i_2<\dots<i_k\leq l\\w=\sigma_{i_1}\sigma_{i_2}\dots\sigma_{i_k}}}\hbar^{l-k}\prod\limits_{j=1}^k \beta_{i_j},
\end{equation}
where $\sigma_i$ is the simple reflection associated to a simple root $\alpha_i$, $\beta_i=\sigma_1\cdots\sigma_{i-1}\alpha_i$, $R(y)=\{\beta_i|1\leq i\leq l\}$, and $\stab_-(w)|_y$ denotes the restriction of $\stab_-(w)$ to the fixed point $yB$.
\end{thm}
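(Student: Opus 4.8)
The plan is to prove \eqref{formula restriction -B} by induction on the length $l(y)$ of the fixed point, in the spirit of Billey's formula for equivariant Schubert classes, to which this formula should degenerate. Write $F(w,y)$ for the right-hand side. The first step is purely combinatorial. Setting $y'=\sigma_1\cdots\sigma_{l-1}$, so that $y=y'\sigma_l$ and $R(y)=R(y')\sqcup\{\beta_l\}$, I would split the subwords of $\sigma_1\cdots\sigma_l$ representing $w$ into those avoiding the last position and those ending at it: the former are subwords of $y'$ representing $w$, the latter are subwords of $y'$ representing $w\sigma_l$ carrying the extra factor $\beta_l$. Bookkeeping the powers of $\hbar$ and the resulting change of prefactor, this yields the clean recursion
\begin{equation*}
F(w,y'\sigma_l)=\frac{\hbar\,F(w,y')+\beta_l\,F(w\sigma_l,y')}{\hbar-\beta_l}.
\end{equation*}
Thus the theorem reduces to establishing the identical recursion for the genuine restrictions $\stab_-(w)|_y$, the base case $y=e$ being exactly the support and normalization axioms from Section \ref{section 2}, which give $\stab_-(w)|_e=0$ for $w\neq e$ and $\stab_-(e)|_e=\prod_{\alpha\in R^+}(\alpha-\hbar)$.

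The heart of the matter is therefore the geometric recursion
\begin{equation*}
\stab_-(w)|_{y'\sigma_l}=\frac{\hbar\,\stab_-(w)|_{y'}+\beta_l\,\stab_-(w\sigma_l)|_{y'}}{\hbar-\beta_l}.
\end{equation*}
To prove it I would use the minimal parabolic $P_l\supset B$ attached to the simple root $\alpha_l$ of the last letter and the $\mathbb{P}^1$-bundle $G/B\to G/P_l$. The fixed points $y'B$ and $y'\sigma_l B=yB$ lie on a common $T$-invariant $\mathbb{P}^1$ whose tangent weight at $y'B$ is $\beta_l=y'\alpha_l$. The Steinberg-type push-pull correspondence through $T^*(G/P_l)$ induces a Demazure--Lusztig operator on $H_T^*(T^*\mathcal{B})$, and the key claim is that this operator sends $\stab_-(w)$ to the combination of $\stab_-(w)$ and $\stab_-(w\sigma_l)$ with coefficients $\hbar/(\hbar-\beta_l)$ and $\beta_l/(\hbar-\beta_l)$. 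I would verify this claim using the defining properties of the stable envelope: the operator respects the support condition (triangularity in Bruhat order), its action on the diagonal normalization is computed on the fibre $\mathbb{P}^1$, and uniqueness of the stable envelope then fixes the combination. Restricting both sides to $y'B$ and evaluating by equivariant localization along the connecting curve produces the displayed recursion, after which the induction closes at once, since its right-hand side involves only length $l-1$ data.

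The step I expect to be the main obstacle is the exact identification of this operator together with all signs and weight conventions, which are sensitive to the chamber $-$ and to the chosen normalization of $\stab_-$ in Section \ref{section 2}. Concretely, one must confirm that the operator arising from the parabolic correspondence is the Demazure--Lusztig operator itself rather than its inverse or a line-bundle twist, and that the localization contribution of the fibre $\mathbb{P}^1$ reproduces precisely the denominator $\hbar-\beta_l$. A rank-one check is a reliable guide for pinning these down: for $G=SL_2$ the formula predicts $\stab_-(e)|_s=-\hbar$ and $\stab_-(s)|_s=-\alpha$, and one verifies directly that these two values already satisfy the proposed recursion with $y'=e$ and $\beta_1=\alpha$.
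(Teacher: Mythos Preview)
Your approach is essentially the paper's: both derive the recursion
\[
\stab_-(w)|_{y\sigma_\alpha}=\frac{\hbar}{\hbar-y\alpha}\,\stab_-(w)|_y+\frac{y\alpha}{\hbar-y\alpha}\,\stab_-(w\sigma_\alpha)|_y
\]
from a convolution operator $D_\alpha$ attached to the simple root $\alpha$, and then verify that the explicit subword sum satisfies it. Two points of comparison are worth making.

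First, your phrasing of the key claim is muddled. You say the operator sends $\stab_-(w)$ to a combination of $\stab_-(w)$ and $\stab_-(w\sigma_l)$ with coefficients $\hbar/(\hbar-\beta_l)$ and $\beta_l/(\hbar-\beta_l)$; but $\beta_l=y'\alpha_l$ depends on the fixed point $y'$, so no global operator on $H_T^*(T^*\mathcal{B})$ can act with those coefficients. What actually holds (the paper's Lemma~\ref{image of stab}) is that $D_\alpha\stab_-(w)=-\stab_-(w)-\stab_-(w\sigma_\alpha)$ with \emph{constant} coefficients; the $y'$-dependent rational factors arise only after one restricts to $y'B$ via the localized description of $D_\alpha$ (the operator $A_0$ in Proposition~\ref{commutative diagram 1}). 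Your rank-one check is consistent with this corrected statement, but the intermediate claim needs repair.

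Second, the paper does not prove $D_\alpha\stab_-(w)=-\stab_-(w)-\stab_-(w\sigma_\alpha)$ by checking the stable-envelope axioms on the image, as you propose. Instead it pairs against the opposite-chamber basis: the integral $(D_\alpha\stab_+(y),(-1)^n\stab_-(w))$ is proper, hence lies in $H_T^*(\mathrm{pt})$; a degree count forces it to be a constant; setting $\hbar=0$ and localizing then pins down the values. This duality-plus-degree trick bypasses exactly the sign and normalization bookkeeping you anticipate as the main obstacle, and is shorter than verifying support and $\hbar$-divisibility for the image.

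One thing your direct induction does buy over the paper's presentation: the paper separately checks, case by case on the braid-relation types ($m=2,3,4,6$), that the explicit formula is independent of the reduced word for $y$ (Proposition~\ref{independence for -}). In your setup that check is unnecessary: for each fixed reduced expression the induction on $l(y)$ already identifies the formula with the intrinsically defined quantity $\stab_-(w)|_y$, so independence of the expression follows as a corollary rather than requiring a standalone verification.
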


For the positive chamber, we have
\begin{thm}\label{restriction B}
Let $y=\sigma_1\sigma_2\cdots\sigma_l$ be a reduced expression for $y\in W$, and $w\leq y$. Then
\[
\stab_+(y)|_w=\sum\limits_{\substack{1\leq i_1<i_2<\dots<i_k\leq l\\ 
w=\sigma_{i_1}\sigma_{i_2}\dots\sigma_{i_k}}}(-1)^l \prod\limits_{j=1}^k\frac{\sigma_{i_1}\sigma_{i_2}\dots\sigma_{i_j}\alpha_{i_j}-\hbar}{\sigma_{i_1}\sigma_{i_2}\dots\sigma_{i_j}\alpha_{i_j}} \frac{\hbar^{l-k}}{\prod\limits_{j=0}^k\prod\limits_{i_j<r<i_{j+1}}\sigma_{i_1}\sigma_{i_2}\dots\sigma_{i_j}\alpha_r}\prod\limits_{\alpha\in R^+}\alpha.
\]
\end{thm}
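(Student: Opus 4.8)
The plan is to deduce this positive-chamber formula from Theorem~\ref{restriction -B} using the duality between the stable bases of opposite chambers. Recall from the Maulik--Okounkov construction (reviewed in Section~\ref{section 2}) that $\stab_+$ and $\stab_-$ are dual with respect to the localization pairing: writing $e(z)$ for the equivariant Euler class of the tangent space $T_{zB}(T^*\mathcal{B})$, one has
\[
\sum_{z\in W}\frac{\stab_+(y)|_z\cdot\stab_-(w)|_z}{e(z)}=\pm\,\delta_{y,w}.
\]
First I would pin down the precise sign and record $e(z)=\prod_{\alpha\in R^+}(-z\alpha)(z\alpha-\hbar)$, obtained from the weights $\{z\alpha:\alpha\in R^-\}$ of $T_{zB}\mathcal{B}$ together with the dual fiber weights twisted by $-\hbar$.

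Next I would record that Theorem~\ref{restriction -B} makes the matrix $\bigl(\stab_-(w)|_z\bigr)$ triangular for the Bruhat order: its entry is a sum over subwords of a reduced word for $z$ equal to $w$, hence vanishes unless $w\leq z$, and the diagonal entry $\stab_-(w)|_w$ is the explicit product of the factors $(\alpha-\hbar)$ and $\beta_{i_j}$. The displayed duality then says that the restriction matrices of $\stab_+$ and $\stab_-$ are inverse-transpose up to the diagonal matrix $\mathrm{diag}(e(z))$, so $\stab_+(y)|_w$ is determined entirely by the data of Theorem~\ref{restriction -B}. This also explains the shape of the answer: inverting a triangular matrix with polynomial entries is exactly what produces the rational expression and the denominators $\prod_{i_j<r<i_{j+1}}\sigma_{i_1}\cdots\sigma_{i_j}\alpha_r$ appearing in the statement.

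With this in hand I would prove the theorem by verifying that the claimed closed form satisfies the orthogonality relation, by induction on the length $l(y)-l(w)$ of the Bruhat interval $[w,y]$. The base case $w=y$ isolates the single subword $(i_j=j)$ and reduces to checking that the normalization $\stab_+(y)|_y$ equals the Euler class of the repelling directions at $yB$ for the positive chamber, a finite computation with the roots $\sigma_1\cdots\sigma_j\alpha_j$. For the inductive step I would strip off the final simple reflection $\sigma_l$ from the reduced word $y=\sigma_1\cdots\sigma_l$, split the subword sum according to whether it uses the last letter, and show that the resulting cross terms cancel against the factor $e(z)/\stab_-(w)|_z$.

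The step I expect to be the main obstacle is exactly this cancellation: it is a nontrivial identity in the fraction field of $H^*_T(\mathrm{pt})$ asserting that a signed sum of products of conjugated roots over chains $w\leq z\leq y$ collapses to $\delta_{y,w}$, and one must check that the denominators in the statement cancel exactly between adjacent terms. An alternative that avoids the inversion is to verify directly that the right-hand side meets the three characterizing properties of $\stab_+(y)$ --- support on $\{w\leq y\}$, the normalization at $w=y$, and the Maulik--Okounkov degree bound for $w<y$ --- and then invoke the uniqueness of the stable envelope; in that route the degree bound for the rational expression is the delicate point.
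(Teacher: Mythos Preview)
Your plan takes a genuinely different route from the paper, and as written it has a real gap. You propose to derive the positive-chamber formula from Theorem~\ref{restriction -B} via the duality $(\stab_+(y),(-1)^n\stab_-(w))=\delta_{y,w}$, i.e.\ by inverting the triangular restriction matrix of $\stab_-$. This is correct in principle, but you yourself flag the obstacle: the cancellation identity asserting that the signed sum over $z$ in $[w,y]$ collapses to $\delta_{y,w}$ is a nontrivial algebraic identity in conjugated roots, and you give no mechanism for proving it. Your fallback---checking the three Maulik--Okounkov axioms directly---also leaves the hard part (the divisibility-by-$\hbar$ bound for the rational expression) unaddressed.

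The paper avoids both difficulties by proving Theorem~\ref{restriction B} independently, with the same machinery used for Theorem~\ref{restriction -B}. The key input you are missing is the geometric operator $D_\alpha$ coming from the conormal correspondence over $\mathcal{B}\times_{\mathcal{P}_\alpha}\mathcal{B}$. Proposition~\ref{commutative diagram 1} and Lemma~\ref{image of stab} translate $D_\alpha$ into a \emph{recursion} on restrictions (Corollary~\ref{characterize +}):
\[
\stab_+(y\sigma_\alpha)|_w=-\frac{\hbar}{w\alpha}\stab_+(y)|_w-\frac{w\alpha-\hbar}{w\alpha}\stab_+(y)|_{w\sigma_\alpha},
\]
which replaces the divisibility axiom and, together with support and normalization, uniquely pins down $\stab_+$. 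The paper then encodes the claimed formula as the expansion of the product
\[
R_{\alpha_1,\dots,\alpha_l}=\prod_{i=1}^l\Bigl(\frac{\hbar}{\alpha_i}+\frac{\hbar+\alpha_i}{\alpha_i}u_{\sigma_i}\Bigr)
\]
in the semidirect product $Q\rtimes W$; the recursion is then immediate from $R_{\alpha_1,\dots,\alpha_l,\alpha}=R_{\alpha_1,\dots,\alpha_l}\bigl(\frac{\hbar}{\alpha}+\frac{\hbar+\alpha}{\alpha}u_{\sigma_\alpha}\bigr)$, and the only remaining work is checking that $R$ is independent of the reduced word (done case by case on the braid relations). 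So where your approach leads to a global cancellation over a Bruhat interval, the paper reduces everything to a one-step multiplicative identity.
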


The stable basis is an interesting object. In our case, it is the characteristic cycles of Verma modules up to a sign, see \cite{ginsburg1986} and Remark 3.5.3 in \cite{Maulik2012}. In the case of Hilbert schemes of points on $\mathbb{C}^2$, it corresponds to Schur functions if we identify the equivariant cohomology ring of Hilbert schemes with the symmetric functions, while the fixed point basis corresponds to Jack symmetric functions, see e.g. \cite{Maulik2012}, \cite{hiraku1999lectures}, \cite{nakajima2014more}. The transition matrix between these two bases was obtained in \cite{Shenfeld2013}. In \cite{smirnov2014polynomials}, Smirnov defined generalized Jack polynomials in the equivariant cohomology ring of instanton moduli space, and he used stable basis to derive a combinatorial formula for the expansion of generalized Jack polynomials in the basis of Schur polynomials. There is also a K-theoretic version of the stable basis. Negut defined some rational version of the Schur polynomials with the help of this K-theoretic stable basis in Hilbert scheme of points on $\mathbb{C}^2$, and he proved a rational version of the Pieri rule (\cite{negut2014m}).

The paper is organized as follows. In Section 2, we apply results in \cite{Maulik2012} to define the stable basis of $T^*\mathcal{B}$ and $T^*\mathcal{P}$. In Section 3, we prove our main Theorems. We give the details for the proof of Theorem \ref{restriction -B}. Theorem \ref{restriction B} can be proved similarly. In Section 4, we give some applications: firstly, we generalize our main theorems to $T^*\mathcal{P}$ case; secondly, we get Billey's restriction formula \cite{Billey1997} for Schubert varieties from our main result. A use of our method gives a simple proof of the restriction formula for Schubert variety of $G/P$ (see \cite{Tymoczko2009}), which can also be deduced from a limiting process. The restriction formulas for stable basis will also be used in a future paper about equivariant quantum cohomology of cotangent bundles of homogeneous spaces (\cite{Su}).

\subsection*{Acknowledgments}  I wish to express my deepest thanks to my advisor Prof. Andrei Okounkov for teaching me stable basis and his patience and invaluable guidance. The author also thanks Chiu-Chu Liu, Michael McBreen, Davesh Maulik, Andrei Negut, Andrey Smirnov, Zijun Zhou, Zhengyu Zong for many stimulating conversations and emails. A lot of thanks also go to my friend Pak-Hin Lee for editing a previous version of the paper.

\section{Stable basis}\label{section 2}

In this section, we apply the construction in \cite{Maulik2012} to $T^*\mathcal{B}$ and $T^*\mathcal{P}$.

Let us fix more notations. Let $\Delta$ be the set of simple roots, and $I$ be a subset of $\Delta$. Let $P_I=\bigcup_{w\in W_I}BwB$ a parabolic subgroup containing $B$, $W_{P_I}$ the subgroup generated by the simple reflections $\sigma_\alpha$ for $\alpha\in I$, and $R_{P_I}^{\pm}$ the roots in $R^{\pm}$ spanned by $I$. It is well-known that every parabolic subgroup is conjugate to some parabolic subgroup containing the fixed Borel subgroup $B$, which is of the form $P_I$ for some subset $I$ in $\Delta$, and  $P_I$ is not conjugate to $P_J$ if the two subsets $I$ and $J$ are not equal (see \cite{Springer2010}). For any  cohomology classes $\alpha$ and $\beta$, let
\[(\alpha,\beta):=\int_X \alpha\cdot \beta\]
denote the standard inner product on cohomology.

\subsection{Stable basis for $T^*\mathcal{B}$}

\subsubsection{Fixed point set}
The $A$-fixed points of $T^*\mathcal{B}$ is in one-to-one correspondence with the Weyl group $W$. The fixed point corresponds to $w\in W$ is denoted by $wB$. For any cohomology class $\alpha\in H_T^*(T^*\mathcal{B})$, let $\alpha|_w$ denote the restriction of $\alpha$ to the fixed point $wB$.

\subsubsection{Chamber decomposition}
The cocharacters
\[
\sigma:\mathbb{C^*}\rightarrow A
\]
form a lattice. Let 
\[
\mathfrak{a}_{\mathbb{R}}=\cochar(A)\otimes_{\mathbb{Z}}\mathbb{R}.
\]

Define the torus roots to be the $A$-weights occurring in the normal bundle to $(T^*\mathcal{B})^A$. Then the root hyperplanes partition $\mathfrak{a}_{\mathbb{R}}$ into finitely many chambers
\[
\mathfrak{a}_{\mathbb{R}}\setminus\bigcup \alpha_i^\perp=\coprod \mathfrak{C}_i.
\]
It is easy to see in this case that the torus roots are just the roots for $G$. Let $+$ denote the chamber such that all roots in $R^+$ are positive on it, and $-$ the opposite chamber.

\subsubsection{Stable leaves}
Let $\mathfrak{C}$ be a chamber. For any fixed point $yB$, define the stable leaf of $yB$ by
\[
\Leaf_{\mathfrak{C}}(yB)=\left\{x\in T^*\mathcal{B} \left| \lim\limits_{z\rightarrow 0}\sigma(z)\cdot x=yB \right. \right\}
\]
where $\sigma$ is any cocharacter in $\mathfrak{C}$; the limit is independent of the choice of $\sigma\in \mathfrak{C}$. In the $T^*\mathcal{B}$ case, $\Leaf_+(yB)=T_{ByB/B}^*\mathcal{B}$, and $\Leaf_-(yB)=T_{B^-yB/B}^*\mathcal{B}$, where $B^-$ is the opposite Borel subgroup.

Define a partial order on the fixed points as follows:
\[
wB\preceq_{\mathfrak{C}} yB \text{\quad if\quad} \overline{\Leaf_{\mathfrak{C}}(yB)}\cap wB\neq \emptyset.
\]
By the description of $\Leaf_+(yB)$, the order $\preceq_+$ is the same as the Bruhat order $\leq$, and $\preceq_-$ is the opposite order.  Define the slope of a fixed point $yB$ by
\[
\Slope_{\mathfrak{C}}(yB)=\bigcup_{wB\preceq_{\mathfrak{C}} yB} \Leaf_{\mathfrak{C}}(wB).
\] 

\subsubsection{Stable basis}
For each $y\in W$, let $T_y^*\mathcal{B}$ and $T_y(T^*\mathcal{B})$ denote $T_{yB}^*\mathcal{B}$ and $T_{yB}(T^*\mathcal{B})$ respectively, and define $\epsilon_y=e^A(T_y^*\mathcal{B})$. Here, $e^A$ denotes the $A$-equivariant Euler class. Let $N_y$ denote the normal bundle of $T^*\mathcal{B}$ at the fixed point $yB$. The chamber $\mathfrak{C}$ gives a decomposition of the normal bundle
\[
N_y=N_{y,+}\oplus N_{y,-}
\]
into $A$-weights which are positive and negative on $\mathfrak{C}$ respectively. The sign in $\pm e(N_{y,-})$ is determined by the condition
\[
\pm e(N_{y,-})|_{H_A^*(\text{pt})}=\epsilon_y.
\]
The following theorem is Theorem 3.3.4 in \cite{Maulik2012} applied to $T^*\mathcal{B}$.
 
\begin{thm}[\cite{Maulik2012}]\label{stable theorem B}
There exists a unique map of $H_T^*(\text{pt})$-modules
\[
\stab_{\mathfrak{C}}:H_T^*((T^*\mathcal{B})^A)\rightarrow H_T^*(T^*\mathcal{B})
\]
such that for any $y\in W$, $\Gamma=\stab_{\mathfrak{C}}(y)$ satisfies:
\begin{enumerate}
\item $\supp \Gamma\subset \Slope_{\mathfrak{C}}(yB)$,
\item $\Gamma|_y=\pm e(N_{-,y})$, with sign according to $\epsilon_y$,
\item $\Gamma|_w$ is divisible by $\hbar$, for any $wB\prec_{\mathfrak{C}} yB$,
\end{enumerate}
where $y$ in $\stab_{\mathfrak{C}}(y)$ is the unit in $H_T^*(yB)$.
\end{thm}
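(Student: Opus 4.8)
The plan is to separate existence and uniqueness, using throughout that $X=T^*\mathcal{B}$ is equivariantly formal with isolated $A$-fixed points, so that the restriction map $H_T^*(X)\to\bigoplus_{w}H_T^*(wB)$ is injective, and that $\stab_{\mathfrak{C}}(y)$ is homogeneous of degree $2n$, matching the degree of $e(N_{y,-})$ in (2); here $n=\dim_{\mathbb{C}}\mathcal{B}=|R^+|$ is the complex codimension of each stable leaf. Two numerical facts are decisive. First, because the holomorphic symplectic form on $T^*\mathcal{B}$ is $A$-invariant, the $A$-weights of $T_w(T^*\mathcal{B})$ occur in pairs $\mu,-\mu$; hence $\dim_{\mathbb{C}}N_{w,+}=\dim_{\mathbb{C}}N_{w,-}=n$ for every $w$, so $e(N_{w,-})$ is homogeneous of degree $2n$. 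Second, the weights at $wB$ have the form $w\alpha$ or $\hbar-w\alpha$ with $\alpha\in R^+$, none of which is a multiple of $\hbar$ alone, so $e(N_{w,-})|_{\hbar=0}\neq0$ and in particular $e(N_{w,-})$ is coprime to $\hbar$. Finally I would fix a linear refinement $w_1\prec\cdots\prec w_N=y$ of $\preceq_{\mathfrak{C}}$ on $\{w:w\preceq_{\mathfrak{C}}y\}$ and set $Z_k=\bigcup_{i\le k}\overline{\Leaf_{\mathfrak{C}}(w_iB)}$; one checks $w_k\notin Z_{k-1}$ and $Z_k\setminus Z_{k-1}=\Leaf_{\mathfrak{C}}(w_kB)$, a smooth affine bundle over $w_kB$ of complex codimension $n$, so the Thom isomorphism gives $H_T^{2n}(Z_k,Z_{k-1})\cong H_T^0(w_kB)=\mathbb{Q}$, generated by a class $\tau_k$ with $\tau_k|_{w_k}=e(N_{w_k,-})$.

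For uniqueness I would induct up this stratification. Let $D$ be the difference of two classes satisfying (1)--(3): it is homogeneous of degree $2n$, supported on $\Slope_{\mathfrak{C}}(yB)$, with $D|_y=0$, with $D|_w$ divisible by $\hbar$ for $w\prec_{\mathfrak{C}}y$, and with $D|_w=0$ for $w\not\preceq_{\mathfrak{C}}y$. I claim $D|_{Z_k}=0$ for all $k$. Granting $D|_{Z_{k-1}}=0$, the long exact sequence of $(Z_k,Z_{k-1})$ places $D|_{Z_k}$ in the image of $H_T^{2n}(Z_k,Z_{k-1})=\mathbb{Q}\cdot\tau_k$, so $D|_{w_k}=c\,e(N_{w_k,-})$ for a scalar $c$. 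If $w_k=y$ then $D|_y=0$ gives $c=0$; if $w_k\prec_{\mathfrak{C}}y$ then divisibility of $c\,e(N_{w_k,-})$ by $\hbar$ together with $\hbar\nmid e(N_{w_k,-})$ again gives $c=0$. Hence $D|_{Z_k}=0$, closing the induction, and in particular $D$ vanishes at every fixed point; by injectivity of localization, $D=0$.

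For existence I would build $\Gamma=\stab_{\mathfrak{C}}(y)$ by descending the stratification, seeking $\Gamma=\pm\tau_N+\sum_{w\prec_{\mathfrak{C}}y}a_w\tau_w$, where $\tau_w$ denotes the Thom class supported on $\overline{\Leaf_{\mathfrak{C}}(wB)}$, so that $\tau_w|_w=e(N_{w,-})$ and $\tau_w|_{w'}=0$ whenever $w'\not\preceq_{\mathfrak{C}}w$. The support condition (1) holds by construction, and since $\tau_w|_y=0$ for $w\prec_{\mathfrak{C}}y$ the top term yields the normalization (2). Homogeneity forces each $a_w\in\mathbb{Q}$. Processing the fixed points $v\prec_{\mathfrak{C}}y$ from top to bottom, the restriction takes the shape $\Gamma|_v=a_v\,e(N_{v,-})+R_v$, where $R_v$ gathers the contributions, already fixed, of the strata lying strictly above $v$; one must choose the single scalar $a_v$ so that $\Gamma|_v$ becomes divisible by $\hbar$, i.e. so that $a_v\,e(N_{v,-})|_{\hbar=0}=-R_v|_{\hbar=0}$ in $H_A^*(\text{pt})$.

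The main obstacle is exactly this last step. Since $a_v$ is a single scalar while $R_v|_{\hbar=0}$ is a priori an arbitrary homogeneous degree-$2n$ element of $H_A^*(\text{pt})$, the equation is solvable only because $R_v|_{\hbar=0}$ is in fact proportional to $e(N_{v,-})|_{\hbar=0}$. Proving this proportionality is where the geometry of $T^*\mathcal{B}$ must enter---the stable leaves are Lagrangian, and at $\hbar=0$ the relevant cycles degenerate onto the zero section---so that no contribution transverse to $e(N_{v,-})$ survives. Alternatively one can bypass the abstract extension argument entirely by exhibiting an explicit geometric representative of $\stab_{\mathfrak{C}}(y)$, namely the characteristic cycle of the corresponding Verma module up to sign, and verifying (1)--(3) for it directly. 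Once existence is secured, everything else reduces to the two facts $\dim_{\mathbb{C}}N_{w,-}=n$ and $\hbar\nmid e(N_{w,-})$.
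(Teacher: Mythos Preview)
The paper does not prove this theorem: it is quoted as Theorem~3.3.4 of \cite{Maulik2012} applied to $T^*\mathcal{B}$, with no argument given. There is thus no proof in the paper to compare your proposal against.

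That said, your uniqueness argument is correct and is essentially the one in \cite{Maulik2012}. For existence you correctly isolate the crux---that $R_v|_{\hbar=0}$ must be a scalar multiple of $e(N_{v,-})|_{\hbar=0}=\pm\epsilon_v$---but you do not prove it; the remark that ``the relevant cycles degenerate onto the zero section'' is a heuristic, not an argument. The missing ingredient is the lemma that for any $T$-invariant Lagrangian cycle $L$ of middle dimension in $T^*\mathcal{B}$ and any fixed point $v$, one has $[L]|_v\equiv m\,\epsilon_v\pmod{\hbar}$ for some $m\in\mathbb{Z}$. This is where the symplectic (Lagrangian) structure genuinely enters, and it is established in \cite{Maulik2012}; without it your inductive correction cannot proceed, since $R_v|_{\hbar=0}$ lives a priori in a space of dimension much larger than one. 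Your alternative of taking the characteristic cycle of a Verma module and ``verifying (1)--(3) for it directly'' does not obviously bypass the difficulty: condition~(3) for such a cycle again reduces to this Lagrangian-restriction lemma together with a nontrivial multiplicity identity. In \cite{Maulik2012} existence is in fact obtained by constructing $\stab_{\mathfrak{C}}$ as convolution with an explicit Lagrangian correspondence in $(T^*\mathcal{B})^A\times T^*\mathcal{B}$ (cf.\ Remark~\ref{remark 2.2}(1)) and reading off (1)--(3) together, rather than by the abstract inductive extension you outline.
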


\begin{rem}
\label{remark 2.2}
\leavevmode
\begin{enumerate}
\item
The map is defined by a Lagrangian correspondence between $(T^*\mathcal{B})^A\times T^*\mathcal{B}$, hence maps middle degree to middle degree.
\item 
From the characterization, the transition matrix from $\{\stab_{\mathfrak{C}}(y)| y\in W\}$ to the fixed point basis is a triangular matrix with nontrivial diagonal terms. Hence, after localization, $\{\stab_{\mathfrak{C}}(y)| y\in W\}$ form a basis for the cohomology, which we call the \textbf{stable basis}.
\item
Theorem 4.4.1 in \cite{Maulik2012} proves that $\{\stab_{\mathfrak{C}}(y)| y\in W\}$ and $\{(-1)^n\stab_{\mathfrak{-C}}(y)| y\in W\}$ are dual bases, i.e.,
\[(\stab_{\mathfrak{C}}(y),(-1)^n\stab_{\mathfrak{-C}}(w))=\delta_{y,w}.\]
Here $n=\dim_{\mathbb{C}}\mathcal{B}$.
\end{enumerate}
\end{rem}

\subsection{Stable basis for $T^*\mathcal{P}$}
A similar construction works for $T^*\mathcal{P}$. In this case, the fixed point set $(T^*\mathcal{P})^A$ corresponds to $W/W_P$ (\cite{Bernstein1973a}). Let $yP$ denote the fixed point in $T^*\mathcal{P}$ corresponding to the coset $yW_P$. Let $T_{\bar{y}}^*\mathcal{P}$ and $T_{\bar{y}}(T^*\mathcal{P})$ denote $T_{yP}^*\mathcal{P}$ and $T_{\bar{y}}(T^*\mathcal{P})$, respectively.  Define $\epsilon_{\bar{y}}=e^A(T_{\bar{y}}^*\mathcal{P})$. For any cohomology class $\alpha\in H_T^*(T^*\mathcal{P})$, let $\alpha|_{\bar{y}}$ denote the restriction of $\alpha$ to the fixed point $yP$. Then the theorem is 
\begin{thm}[\cite{Maulik2012}]\label{stable for P}
There exists a unique map of $H_T^*(\text{pt})$-modules
\[
\stab_{\mathfrak{C}}:H_T^*((T^*\mathcal{P})^A)\rightarrow H_T^*(T^*\mathcal{P})
\]
such that for any $\bar{y}\in W/W_P$, $\Gamma=\stab_{\mathfrak{C}}(\bar{y})$ satisfies:
\begin{enumerate}
\item $\supp \Gamma\subset \Slope_{\mathfrak{C}}(yP)$,
\item $\Gamma|_{\bar{y}}=\pm e(N_{-,\bar{y}})$, with sign according to $\epsilon_{\bar{y}}$,
\item $\Gamma|_{\bar{w}}$ is divisible by $\hbar$, for any $\bar{w}\prec_{\mathfrak{C}} \bar{y}$,
\end{enumerate}
where $\bar{y}$ in $\stab_{\mathfrak{C}}(\bar{y})$ is the unit in $H_T^*(yP)$.
\end{thm}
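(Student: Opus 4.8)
The plan is to deduce this statement from the general existence-and-uniqueness theorem for stable envelopes (Theorem 3.3.4 in \cite{Maulik2012}), exactly as Theorem \ref{stable theorem B} was obtained for $T^*\mathcal{B}$. The abstract construction applies to any smooth quasi-projective holomorphic symplectic variety $X$ carrying an action of a torus $T=A\times\mathbb{C}^*$, where $A$ preserves the symplectic form $\omega$ while the $\mathbb{C}^*$ factor scales $\omega$ by the character $-\hbar$, subject to a properness hypothesis: $X$ must admit an $A$-equivariant proper morphism to an affine base. Granting these hypotheses, the stable envelope is produced as a Lagrangian correspondence supported on the closure of the stable slope, and conditions (1)--(3) characterize it uniquely. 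Thus the entire task is to verify that $X=T^*\mathcal{P}$ fits this framework.

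First I would record the symplectic geometry. As a cotangent bundle, $T^*\mathcal{P}$ carries its canonical holomorphic symplectic form; the residual action of $A\subset G$ on $\mathcal{P}=G/P$ lifts to a Hamiltonian action preserving $\omega$, while the $\mathbb{C}^*$ dilating the fibers scales $\omega$ by $-\hbar$, matching the abstract setup. Next, and most importantly, I would verify the properness hypothesis. The relevant morphism is the generalized Springer resolution $\mu\colon T^*\mathcal{P}\cong G\times_P\mathfrak{n}_P\to\overline{\mathcal{O}}\subset\mathcal{N}$, the moment map onto the closure of the associated Richardson nilpotent orbit. This map is proper, since it factors as the closed immersion $T^*\mathcal{P}\hookrightarrow\mathcal{P}\times\mathfrak{g}$ followed by the projection to $\mathfrak{g}$, which is proper because $\mathcal{P}$ is complete; and its image $\overline{\mathcal{O}}$ is affine, being closed in the affine nilpotent cone $\mathcal{N}\subset\mathfrak{g}\cong\mathfrak{g}^*$. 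This supplies precisely the $A$-equivariant proper map to an affine base required by \cite{Maulik2012}, and guarantees that the leaves $\Leaf_{\mathfrak{C}}(yP)$ have proper closures over the fixed locus, so the defining correspondence is well-posed.

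Finally, I would transcribe the fixed-point data. The $A$-fixed locus $(T^*\mathcal{P})^A$ is the finite set indexed by $W/W_P$ (\cite{Bernstein1973a}), hence proper; the torus roots are again the roots of $G$, so the chamber decomposition, the leaves, the partial order $\preceq_{\mathfrak{C}}$, and the slopes $\Slope_{\mathfrak{C}}$ are defined exactly as for $T^*\mathcal{B}$, and the sign in condition (2) is pinned down by $\epsilon_{\bar y}=e^A(T_{\bar y}^*\mathcal{P})$. With every hypothesis in place, Theorem 3.3.4 of \cite{Maulik2012} yields the unique $H_T^*(\mathrm{pt})$-linear map $\stab_{\mathfrak{C}}$ satisfying (1)--(3). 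The only genuine obstacle is the properness verification above; concretely, one must make sure the moment-map image really is the affine Richardson orbit closure so that the general theorem applies verbatim. Everything else is a direct copy of the $T^*\mathcal{B}$ argument.
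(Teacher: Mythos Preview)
Your proposal is correct and matches the paper's approach: the paper does not give a proof of this theorem at all, but simply states it as a direct citation of Theorem 3.3.4 in \cite{Maulik2012}, remarking only that ``a similar construction works for $T^*\mathcal{P}$.'' Your write-up supplies the hypothesis-checking (symplectic structure, properness of the moment map to the affine Richardson orbit closure, finiteness of the fixed locus) that the paper leaves implicit, so you have done strictly more than the paper does.
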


\begin{rem}
The Bruhat order on $W/W_P$ is defined as follows:
\[yW_P<wW_P \text{\quad if\quad } \overline{ByP/P}\subset \overline{BwP/P}.\]
If the chamber $\mathfrak{C}=+$, then the order $\preceq_{+}$ is the Bruhat order on $W/W_P$. If the chamber $\mathfrak{C}=-$, the order is the opposite Bruhat order.
\end{rem}
From now on, we use $\stab_{\mathfrak{C}}(y)$ to denote the stable basis of $T^*\mathcal{B}$, and $\stab_{\mathfrak{C}}(\bar{y})$ to denote the stable basis of $T^*\mathcal{P}$.

\section{Restriction formula of stable basis for $T^*\mathcal{B}$}
In this section, we prove Theorem \ref{restriction -B} and Theorem \ref{restriction B}.

\subsection{Proof of Theorem \ref{restriction -B}}

Let $Q$ be the quotient field of $H_T^*(\text{pt})$, and $F(W,Q)$ be the functions from $W$ to $Q$. Restriction to fixed points gives a map
\[H_T^*(T^*\mathcal{B})\rightarrow H_T^*((T^*\mathcal{B})^T)
=\bigoplus_{w\in W}H_T^*(wB)\]
and embeds $H_T^*(T^*\mathcal{B})$ into $F(W,Q)$.

It is well-known that the diagonal $G$-orbits on $\mathcal{B}\times \mathcal{B}$ are indexed by the Weyl group, see Chapter 3 in \cite{Chriss2009}. For each simple root $\alpha\in \Delta$, let $Y_{\alpha}$ be the orbit corresponding to the reflection $\sigma_{\alpha}$. Then 
\[\overline{Y_{\alpha}}=\mathcal{B}\times_{\mathcal{P}_{\alpha}}\mathcal{B}\]
where $\mathcal{P}_{\alpha}=G/P_{\alpha}$ and $P_{\alpha}$ is the minimal parabolic subgroup corresponding to the simple root $\alpha$. Let $T_{\overline{Y_{\alpha}}}^*(\mathcal{B}\times\mathcal{B})$ be the conormal bundle to $\overline{Y_{\alpha}}$. This is a Lagrangian correspondence in $T^*\mathcal{B}\times T^*\mathcal{B}$, and defines a map
\[
D_{\alpha}:H_T^*(T^*\mathcal{B})\rightarrow H_T^*(T^*\mathcal{B}).
\]
Define an operator 
$A_0: F(W,Q)\rightarrow F(W,Q)$ by the formula
\[(A_0\psi)(w)=\frac{\psi(w\sigma_{\alpha})-\psi(w)}{w\alpha}(w\alpha-\hbar).\]
A similar operator is defined in \cite{Bernstein1973a}. Then we have the following important commutative diagram.
\begin{prop}\label{commutative diagram 1}
The diagram
\[\xymatrix{
H_T^*(T^*\mathcal{B}) \ar@{^{(}->}[r]
 \ar[d]_{D_{\alpha}} & F(W,Q) \ar[d]^{A_0} \\
H_T^*(T^*\mathcal{B}) \ar@{^{(}->}[r] & F(W,Q)\\} \]
commutes.
\end{prop}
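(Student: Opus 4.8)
The plan is to exploit the injectivity of the restriction map $H_T^*(T^*\mathcal{B})\hookrightarrow F(W,Q)$ recalled at the start of the section: since both vertical maps are $H_T^*(\text{pt})$-linear and the horizontal maps are injective, commutativity of the square is equivalent to the assertion that $A_0$ computes the matrix of $D_\alpha$ in the fixed-point basis. So I would first unwind $D_\alpha$ as the convolution operator $D_\alpha(\gamma)=p_{2*}(p_1^*\gamma\cdot[Z])$ attached to the Lagrangian cycle $Z=T^*_{\overline{Y_\alpha}}(\mathcal{B}\times\mathcal{B})\subset M:=T^*\mathcal{B}\times T^*\mathcal{B}$, where $p_1,p_2$ are the two projections; this is well defined because $\overline{Y_\alpha}$, hence $Z$, is proper over each factor.

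Next I would apply fixed-point localization. The $A$-fixed points of $M$ are the isolated points $(uB,u'B)$, and $e^A(T_{(uB,u'B)}M)=e^A(T_u(T^*\mathcal{B}))\,e^A(T_{u'}(T^*\mathcal{B}))$, so the standard localization formula for $p_{2*}$ (the target Euler factor cancels) yields
\[(D_\alpha\gamma)|_w=\sum_{u\in W}\gamma|_u\,\frac{[Z]|_{(uB,wB)}}{e^A(T_u(T^*\mathcal{B}))}.\]
Because $M^A$ lies in the zero section, $Z^A=Z\cap M^A$ consists precisely of the $(uB,u'B)\in\overline{Y_\alpha}$, i.e. $u'\in\{u,u\sigma_\alpha\}$; hence $[Z]|_{(uB,wB)}\neq0$ only for $u\in\{w,w\sigma_\alpha\}$. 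Thus $D_\alpha$ already has the nearest-neighbor shape of $A_0$, and matching with $(A_0\psi)(w)=\frac{\psi(w\sigma_\alpha)-\psi(w)}{w\alpha}(w\alpha-\hbar)$ reduces the proposition to the two scalar identities
\[\frac{[Z]|_{(wB,wB)}}{e^A(T_w(T^*\mathcal{B}))}=-\frac{w\alpha-\hbar}{w\alpha},\qquad \frac{[Z]|_{(w\sigma_\alpha B,wB)}}{e^A(T_{w\sigma_\alpha}(T^*\mathcal{B}))}=\frac{w\alpha-\hbar}{w\alpha}.\]

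To compute the two numerators I would use that $\overline{Y_\alpha}=\mathcal{B}\times_{\mathcal{P}_\alpha}\mathcal{B}$ is smooth, so $Z$ is the total space of the conormal bundle of $S:=\overline{Y_\alpha}$ in $V:=\mathcal{B}\times\mathcal{B}$ and is itself smooth; the self-intersection formula then gives $[Z]|_P=e^A(N_{Z/M}|_P)$ for $P\in Z^A$. At a zero-section fixed point $P$ the normal space splits as $N_{Z/M}|_P\cong N_{S/V}|_P\oplus (T_PS)^*$, where the second summand (coming from the cotangent fibers of $M$) carries the $-\hbar$ twist while the first is a pure $A$-weight space. I would read off the $T$-weights of $T_PS$ and $N_{S/V}|_P$ from the $\mathbb{P}^1$-bundle $\mathcal{B}\to\mathcal{P}_\alpha$ at the diagonal point $(wB,wB)$ and the off-diagonal point $(w\sigma_\alpha B,wB)$, assemble $e^A(N_{Z/M}|_P)$, and divide by the relevant $e^A(T_u(T^*\mathcal{B}))$.

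The main obstacle is precisely this weight bookkeeping. At the two points the weights of $T_PS$ and $N_{S/V}|_P$ differ by applying $\sigma_\alpha$ in one factor, and every conormal and cotangent direction must be tracked together with its $-\hbar$ shift. I expect that, after taking the ratio of the large products of roots in $e^A(N_{Z/M}|_P)$ against $e^A(T_u(T^*\mathcal{B}))$, all factors except the one running along the $\alpha$-direction of the $\mathbb{P}^1$-fiber pair off and cancel, leaving exactly $\pm(w\alpha-\hbar)/w\alpha$; verifying this cancellation and pinning down the two signs is the delicate part of the argument.
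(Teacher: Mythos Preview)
Your approach is correct and essentially the same as the paper's: the paper also checks commutativity on the fixed-point basis via localization, computing the pairings $\bigl(T^*_{\overline{Y_\alpha}}(\mathcal{B}\times\mathcal{B}),\ \iota_{y*}1\otimes\iota_{w*}1\bigr)$, which are exactly your restrictions $[Z]|_{(yB,wB)}$, and obtains the same two local coefficients $\mp(w\alpha-\hbar)/w\alpha$. The only cosmetic difference is that the paper applies both sides to $\iota_{y*}1$ and compares values, while you expand an arbitrary $\gamma$ in the fixed-point basis; the weight cancellation you anticipate is precisely the computation $e(T_{y\sigma_\alpha}T^*\mathcal{B})=\frac{y\alpha+\hbar}{y\alpha-\hbar}\,e(T_yT^*\mathcal{B})$ that the paper carries out explicitly using that $\sigma_\alpha$ permutes $R^+\setminus\{\alpha\}$.
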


\begin{proof}
Since $H_T^*(T^*\mathcal{B})$ has a fixed point basis after localization, it suffices to show that the two paths around the diagram agree on elements of the form $\iota_{y*}(1)$, where $\iota_y$ is the inclusion of the fixed point $yB$ in $T^*\mathcal{B}$, and 1 is the unit in $H_T^*(yB)$. Such an element gives to a function $\psi_y\in F(W,Q)$ characterized by
\[\psi_y(y)=e(T_yT^*\mathcal{B})\]
and $\psi_y(w)=0$ for $w\neq y$.

Then 
\[A_0(\psi_y)(y)= -\frac{y\alpha-\hbar}{y\alpha}\psi_y(y),\quad A_0(\psi_y)(y\sigma_{\alpha})=\frac{y\alpha+\hbar}{y\alpha}\psi_y(y)\]
and 
\[A_0(\psi_y)(w)=0, \text{for } w\notin \{y,y\sigma_{\alpha}\}.\]

Going along the other way of the diagram, we have
\[D_{\alpha}(\iota_{y*}(1))=\sum_{w\in W}\frac{\left(T_{\overline{Y_{\alpha}}}^*(\mathcal{B}\times\mathcal{B}),\iota_{y*}1\otimes \iota_{w*}1\right)}{e(T_wT^*\mathcal{B})}\iota_{w*}(1).\]
By the definition of $Y_{\alpha}$, $\left(T_{\overline{Y_{\alpha}}}^*(\mathcal{B}\times\mathcal{B}),\iota_{y*}1\otimes \iota_{w*}1\right)$ is nonzero if and only if $w\in \{y,y\sigma_{\alpha}\}$.
By localization,
\[\left(T_{\overline{Y_{\alpha}}}^*(\mathcal{B}\times\mathcal{B}),\iota_{y*} 1\otimes \iota_{y*}1\right)=-\frac{y\alpha-\hbar}{y\alpha}e(T_yT^*\mathcal{B})\]
and
\[\left(T_{\overline{Y_{\alpha}}}^*(\mathcal{B}\times\mathcal{B}),\iota_{y*}1\otimes \iota_{y\sigma_{\alpha}*}1\right)=\frac{y\alpha-\hbar}{y\alpha}e(T_{y\sigma_{\alpha}}T^*\mathcal{B}).\]
Hence 
\[D_{\alpha}(\iota_{y*}1)=-\frac{y\alpha-\hbar}{y\alpha}\iota_{y*}1+\frac{y\alpha-\hbar}{y\alpha}\iota_{y\sigma_{\alpha}*}1.\]
Therefore,
\[D_{\alpha}(\iota_{y*}1)(y)=-\frac{y\alpha-\hbar}{y\alpha}\psi_y(y)\]  
and
\[D_{\alpha}(\iota_{y*}1)(y\sigma_{\alpha})=\frac{y\alpha-\hbar}{y\alpha}e(T_{y\sigma_{\alpha}}T^*\mathcal{B})\]
Since $\alpha$ is a simple root,
\begin{align*}
e(T_{y\sigma_{\alpha}}T^*\mathcal{B})&=\prod\limits_{\beta\in R^+}(y\sigma_{\alpha}\beta-\hbar)(-y\sigma_{\alpha}\beta)\\
&=\prod\limits_{\beta\in R^+\setminus \{\alpha\}}(y\sigma_{\alpha}\beta-\hbar)(-y\sigma_{\alpha}\beta)\cdot(-y\alpha-\hbar)y\alpha \\
&=\frac{y\alpha+\hbar}{y\alpha-\hbar}e(T_yT^*\mathcal{B}),
\end{align*}
so we get
\[D_{\alpha}(\iota_{y*}(1))(y\sigma_{\alpha})=\frac{y\alpha+\hbar}{y\alpha}\psi_y(y).\]

Since $D_{\alpha}(\iota_{y*}(1))$ and $A_0(\psi_y)$ take the same values on $W$,
 \[D_{\alpha}(\iota_{y*}(y))=A_0(\psi_y)\]
 as desired.
\end{proof}

The image of the stable basis under the operator $D_{\alpha}$ is given by the following lemma.
\begin{lem}\label{image of stab}
\[D_{\alpha}\stab_\pm(y)=-\stab_\pm(y)-\stab_\pm(y\sigma_{\alpha}).\]
\end{lem}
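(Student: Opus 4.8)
The plan is to establish the identity for the $-$ chamber; the $+$ case is identical after replacing the Bruhat order by its opposite, and it can alternatively be deduced from the $-$ case via the duality of Remark \ref{remark 2.2}(3) together with the self-adjointness of $D_\alpha$ (whose defining correspondence $T^*_{\overline{Y_\alpha}}(\mathcal{B}\times\mathcal{B})$ is symmetric under interchanging the two factors). Since the right-hand side is symmetric under $y\leftrightarrow y\sigma_\alpha$, I may assume $\ell(y\sigma_\alpha)=\ell(y)+1$, so that $y$ is the shorter element and $\Slope_-(y\sigma_\alpha B)\subseteq\Slope_-(yB)$. Setting
\[
\Xi:=D_\alpha\stab_-(y)+\stab_-(y)+\stab_-(y\sigma_\alpha)\in H_T^*(T^*\mathcal{B}),
\]
the goal reduces to showing $\Xi=0$, and for this I would run the rigidity argument that underlies the uniqueness in Theorem \ref{stable theorem B}: a middle-degree class supported on $\Slope_-(yB)$ all of whose fixed-point restrictions are divisible by $\hbar$ must vanish (the minimal fixed point in its support would force its restriction to be a scalar multiple of $e(N_{+,\cdot})$, which is not divisible by $\hbar$).

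First I would verify that $\Xi$ satisfies the three hypotheses. For the degree, $D_\alpha$ is induced by a Lagrangian correspondence and hence sends middle degree to middle degree as in Remark \ref{remark 2.2}(1), so $\Xi$ is middle-degree. For the support, $\stab_-(y)$ and $\stab_-(y\sigma_\alpha)$ are supported in $\Slope_-(yB)$ and $\Slope_-(y\sigma_\alpha B)\subseteq\Slope_-(yB)$; and since $D_\alpha$ is convolution with the conormal to $\mathcal{B}\times_{\mathcal{P}_\alpha}\mathcal{B}$, it saturates supports along the fibers of the $\mathbb{P}^1$-bundle $\mathcal{B}\to\mathcal{P}_\alpha$. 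Because $y$ is minimal in the coset $\{y,y\sigma_\alpha\}$, the base of $\overline{\Leaf_-(yB)}$ is already $P_\alpha$-stable, so $\supp D_\alpha\stab_-(y)\subseteq\Slope_-(yB)$ and therefore $\supp\Xi\subseteq\Slope_-(yB)$.

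The heart of the argument is the divisibility of $\Xi|_w$ by $\hbar$ for every $w\in W$. Using Proposition \ref{commutative diagram 1} to rewrite $(D_\alpha\stab_-(y))|_w$ through the operator $A_0$, a direct manipulation gives
\[
\Xi|_w=\frac{(w\alpha-\hbar)\,\stab_-(y)|_{w\sigma_\alpha}+\hbar\,\stab_-(y)|_w}{w\alpha}+\stab_-(y\sigma_\alpha)|_w.
\]
Reducing modulo $\hbar$ collapses this to $\stab_-(y)|_{w\sigma_\alpha}+\stab_-(y\sigma_\alpha)|_w$. By the support condition together with condition (3) of Theorem \ref{stable theorem B}, every restriction $\stab_-(u)|_v$ is divisible by $\hbar$ unless $v=u$; hence modulo $\hbar$ the only surviving case is $w=y\sigma_\alpha$, where the expression becomes $\stab_-(y)|_y+\stab_-(y\sigma_\alpha)|_{y\sigma_\alpha}$. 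Finally condition (2) identifies $\stab_-(u)|_u$ with $\pm e(N_{-,u})$, and evaluating this Euler class at $\hbar=0$ (the weights pairing off exactly as in the computation appearing in the proof of Proposition \ref{commutative diagram 1}) yields $(-1)^{\ell(u)}\prod_{\gamma\in R^+}\gamma$, the same for all $u$ up to the sign $(-1)^{\ell(u)}$. Since $\ell(y\sigma_\alpha)=\ell(y)+1$, the two terms have opposite parity and cancel modulo $\hbar$, so $\Xi|_w\equiv 0\pmod{\hbar}$ for all $w$.

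Having checked the three hypotheses, rigidity forces $\Xi=0$, which is the claimed identity. I expect the main obstacles to be twofold. The first is making the support-propagation statement precise, namely that convolution with the conormal to $\mathcal{B}\times_{\mathcal{P}_\alpha}\mathcal{B}$ keeps the support inside $\Slope_-(yB)$ when $y$ is minimal in its coset; this requires a careful analysis of the composition of the two Lagrangian correspondences over the $\mathbb{P}^1$-bundle. The second, and more delicate, is the sign bookkeeping in the $\hbar=0$ evaluation of $e(N_{-,u})$: one must confirm that the prescription ``according to $\epsilon_u$'' in condition (2) is compatible with the parity $(-1)^{\ell(u)}$, so that the two leading terms genuinely cancel rather than reinforce.
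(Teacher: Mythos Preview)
Your plan is sound and would give a correct proof, but it takes a genuinely different route from the paper. Rather than running rigidity on $\Xi$, the paper pairs $D_\alpha\stab_+(y)$ against the dual basis $(-1)^n\stab_-(w)$ from Remark~\ref{remark 2.2}(3), notes that this pairing is a proper integral and hence lies in $H_T^*(\text{pt})$, uses a degree count to see it is a constant, and then sets $\hbar=0$ and evaluates by localization (off-diagonal restrictions of $\stab_\pm$ vanish modulo $\hbar$, so only the two diagonal terms survive). The two arguments share the same $\hbar=0$ endgame---your parity computation of $\stab_-(u)|_u\equiv(-1)^{\ell(u)}\prod_{\gamma\in R^+}\gamma$ is exactly what makes the localized pairing equal $-1$ at $w\in\{y,y\sigma_\alpha\}$---but differ in what must be checked beforehand. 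The paper's duality trick entirely sidesteps any analysis of how $D_\alpha$ moves supports: properness of the pairing follows from the opposite-chamber support conditions on $\stab_+$ and $\stab_-$ together with properness of the Steinberg correspondence over each factor, so one never needs to know where $D_\alpha\stab_\pm(y)$ lives. Your route instead requires the support-propagation claim you flag as the first obstacle; that claim is correct (on the base it is the $P_\alpha$-stability of $\overline{B^-yB/B}$ when $y<y\sigma_\alpha$, and the conormal directions are handled by the general behaviour of Lagrangian correspondences), but filling it in is real work the paper avoids. Your second flagged obstacle, the sign bookkeeping, is resolved exactly as you outline.
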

\begin{proof}
We only prove for the $+$ case; the $-$ case is almost the same.

By Remark \ref{remark 2.2}(3), the lemma is equivalent to 
\[(D_{\alpha}\stab_+(y),(-1)^n\stab_-(w))=
\left\{ \begin{array}{cc}
-1& w\in \{y,y\sigma_{\alpha}\}\\
0 & \text{otherwise}.
\end{array}\right. \]
By the properties of stable basis,
\[(D_{\alpha}\stab_+(y),(-1)^n\stab_-(w))\]
is a proper integral. Hence it lies in the nonlocalized coefficient ring $H_T^*(\text{pt})$. A degree count shows it is actually a constant. Therefore we can let $\hbar=0$. Then a simple localization calculation using properties (2) and (3) of Theorem \ref{stable theorem B} yields the desired result.
\end{proof}

Applying Proposition \ref{commutative diagram 1} to the stable basis $\{\stab_-(w)|w\in W\}$, we have
\begin{cor}\label{characterize -}
The stable basis $\{\stab_-(w)|w\in W\}$ are uniquely characterized by the following properties:
\begin{enumerate}
\item 
$\stab_-(w)|_y=0$, unless $y\geq w$.
\item 
$\stab_-(w)|_w=\prod\limits_{\alpha\in R^+,w\alpha\in R^+}(w\alpha-\hbar)\prod\limits_{\alpha\in R^+,w\alpha\in -R^+}w\alpha$.
\item
For any simple root $\alpha$, and $l(y\sigma_{\alpha})=l(y)+1$,
\[\stab_-(w)|_{y\sigma_{\alpha}}=-\frac{\hbar}{y\alpha-\hbar}\stab_-(w)|_y-\frac{y\alpha}{y\alpha-\hbar}\stab_-(w\sigma_{\alpha})|_y.\] 
\end{enumerate}
\end{cor}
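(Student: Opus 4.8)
The goal is to establish Corollary~\ref{characterize -}, which characterizes the stable basis $\{\stab_-(w)\}$ by three explicit properties. My plan is to verify that the stable basis satisfies (1)--(3), and then argue that these properties determine the basis uniquely.

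First I would derive properties (1) and (2) directly from the defining axioms of Theorem~\ref{stable theorem B} applied to the chamber $-$. For (1), since the order $\preceq_-$ is the opposite of the Bruhat order, the support condition $\supp\stab_-(w)\subset\Slope_-(wB)$ forces $\stab_-(w)|_y=0$ unless $yB\preceq_- wB$, i.e.\ unless $y\geq w$ in Bruhat order. For (2), the normalization axiom $\Gamma|_w=\pm e(N_{-,w})$ computes the diagonal restriction: the tangent space $T_w(T^*\mathcal{B})$ has $A$-weights $\{w\alpha,-w\alpha+\hbar\mid\alpha\in R^+\}$ (the fiber directions being dilated by $-\hbar$), and the negative half-space with respect to the $-$ chamber, together with the sign dictated by $\epsilon_w=e^A(T_w^*\mathcal{B})$, yields exactly the stated product $\prod_{w\alpha\in R^+}(w\alpha-\hbar)\prod_{w\alpha\in -R^+}w\alpha$. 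This is a routine weight bookkeeping once the conventions are fixed.

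The substantive part is property (3), which I would obtain by combining Proposition~\ref{commutative diagram 1} with Lemma~\ref{image of stab}. Lemma~\ref{image of stab} gives $D_\alpha\stab_-(w)=-\stab_-(w)-\stab_-(w\sigma_\alpha)$, and the commutative diagram says that restricting to fixed points intertwines $D_\alpha$ with the operator $A_0$. Reading off the value of $A_0(\stab_-(w)|_\bullet)$ at the fixed point $y$ and equating it with the restriction of $-\stab_-(w)-\stab_-(w\sigma_\alpha)$ at $y$ gives a linear relation among $\stab_-(w)|_{y\sigma_\alpha}$, $\stab_-(w)|_y$, and $\stab_-(w\sigma_\alpha)|_y$. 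Here I must be careful: the operator $A_0$ as defined sends $\psi(y)$ and $\psi(y\sigma_\alpha)$ into the value at $y$, so I should apply the intertwining relation at the appropriate fixed point and solve for the unknown $\stab_-(w)|_{y\sigma_\alpha}$ in terms of the lower data, using the hypothesis $l(y\sigma_\alpha)=l(y)+1$ so that $y\alpha\in R^+$. Rearranging the resulting identity should produce exactly the recursion in (3).

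Finally, for uniqueness I would argue that properties (1)--(3) determine all restrictions $\stab_-(w)|_y$ by induction on the length $l(y)$. Property (2) fixes the base case $y=w$ (and property (1) kills everything with $y\not\geq w$), while property (3) expresses any restriction at $y\sigma_\alpha$ of length $l(y)+1$ in terms of restrictions at the strictly shorter element $y$; since every $y\geq w$ can be reached from $w$ by multiplying by simple reflections that increase length, the recursion propagates the values uniquely. Because the restriction map $H_T^*(T^*\mathcal{B})\hookrightarrow F(W,Q)$ is injective, determining all fixed-point restrictions determines the classes themselves. The main obstacle I anticipate is getting the signs and the precise form of the coefficients $-\hbar/(y\alpha-\hbar)$ and $-y\alpha/(y\alpha-\hbar)$ correct when solving the $A_0$-relation for $\stab_-(w)|_{y\sigma_\alpha}$; this requires matching the formulas $A_0(\psi_y)(y\sigma_\alpha)=\frac{y\alpha+\hbar}{y\alpha}\psi_y(y)$ from the proof of Proposition~\ref{commutative diagram 1} against Lemma~\ref{image of stab} without sign errors.
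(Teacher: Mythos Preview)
Your proposal is correct and follows exactly the paper's argument: properties (1) and (2) come from Theorem~\ref{stable theorem B}, property (3) from Proposition~\ref{commutative diagram 1} combined with Lemma~\ref{image of stab}, and uniqueness by ascending induction on $l(y)$. One small clarification on the induction: since the recursion (3) couples $\stab_-(w)$ with $\stab_-(w\sigma_\alpha)$, you must induct on $l(y)$ for all $w$ simultaneously, with base case $y=1$ (where (1) and (2) determine $\stab_-(w)|_1$ for every $w$), rather than starting at $y=w$ for a fixed $w$ as your phrasing suggests.
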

\begin{proof}
It is easy to see that $\{\stab_-(w)|w\in W\}$ satisfies these properties: (1) and (2) follow directly from Theorem \ref{stable theorem B}, and (3) follows from Proposition \ref{commutative diagram 1} and Lemma \ref{image of stab}.

To show the uniqueness of the stable basis satisfying these properties is equivalent to show these properties uniquely determine the values $\stab_-(w)|_y$. We argue by ascending induction on the length $l(y)$ of $y$. Note that $\stab_-(w)|_1$ is determined by (1) and (2). Assume that $l(y\sigma_{\alpha})=l(y)+1$ for some simple root $\alpha$. Then $\stab_-(w)|_{y\sigma_{\alpha}}$ is determined by $\stab_-(w)|_{y}$ and $\stab_-(w\sigma_{\alpha})|_{y}$ by (3), which are known by the induction hypothesis. 
\end{proof}

For the positive chamber, we have
\begin{cor}\label{characterize +}
The stable basis $\{\stab_+(y)|y\in W\}$ are uniquely characterized by the following properties:
\leavevmode
\begin{enumerate}
\item 
$\stab_+(y)|_w=0$, unless $w\leq y$.
\item 
$\stab_+(y)|_y=\prod\limits_{\alpha\in R^+,y\alpha<0}(y\alpha-\hbar)\prod\limits_{\alpha\in R^+,y\alpha>0}y\alpha$.
\item
For any simple root $\alpha$, and $l(y\sigma_{\alpha})=l(y)+1$,
\[\stab_+(y\sigma_{\alpha})|_w=-\frac{\hbar}{w\alpha}\stab_+(y)|_w-\frac{w\alpha-\hbar}{w\alpha}\stab_+(y)|_{w\sigma_{\alpha}}.\]
\end{enumerate}
\end{cor}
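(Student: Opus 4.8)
The plan is to follow exactly the strategy used for Corollary \ref{characterize -}, but carried out in the positive chamber. First I would check that the genuine stable basis $\{\stab_+(y)\mid y\in W\}$ satisfies the three listed properties, and then show that these properties pin down the restrictions $\stab_+(y)|_w$ uniquely by induction. The one structural difference from the previous corollary is that the recursion in property (3) raises the length of the \emph{label} $y$ (from $y$ to $y\sigma_\alpha$) rather than the length of the fixed point, so the uniqueness induction will run on $l(y)$ with $y$ the label of the basis element.

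For the verification, property (1) is immediate from the support condition (1) of Theorem \ref{stable theorem B} together with the fact that $\preceq_+$ coincides with the Bruhat order, so $\stab_+(y)|_w$ can be nonzero only when $w\le y$. Property (2) is the diagonal normalization $\Gamma|_y=\pm e(N_{-,y})$ of Theorem \ref{stable theorem B}: I would identify which weights of the tangent space $T_y(T^*\mathcal{B})$, whose weights are $\{y\beta-\hbar,\,-y\beta\mid \beta\in R^+\}$, are negative on the $+$ chamber, split $R^+$ according to the sign of $y\alpha$, and read off the stated product, with the overall sign fixed by the condition $\pm e(N_{y,-})|_{H_A^*(\mathrm{pt})}=\epsilon_y$.

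The main computational step is property (3). Here I would start from Lemma \ref{image of stab} in the $+$ case, which gives $D_\alpha\stab_+(y)=-\stab_+(y)-\stab_+(y\sigma_\alpha)$, solve for $\stab_+(y\sigma_\alpha)=-D_\alpha\stab_+(y)-\stab_+(y)$, and restrict to the fixed point $w$. By Proposition \ref{commutative diagram 1} the restriction of $D_\alpha\stab_+(y)$ to $w$ equals $(A_0\,\stab_+(y))(w)=\frac{\stab_+(y)|_{w\sigma_\alpha}-\stab_+(y)|_w}{w\alpha}(w\alpha-\hbar)$; after substituting and collecting the coefficient of $\stab_+(y)|_w$, the scalar $\frac{w\alpha-\hbar}{w\alpha}-1$ collapses to $-\frac{\hbar}{w\alpha}$, yielding precisely the claimed recursion. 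I expect this algebraic bookkeeping to be the only delicate point, and it is routine.

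Finally, for uniqueness I would argue by ascending induction on $l(y)$. The base case $\stab_+(1)$ is forced by (1) and (2): property (1) makes every restriction vanish except at $w=1$, whose value is prescribed by (2). For the inductive step, write any $z$ of positive length as $z=y\sigma_\alpha$ with $\alpha$ simple and $l(y)=l(z)-1$; then property (3) expresses each $\stab_+(z)|_w$ through $\stab_+(y)|_w$ and $\stab_+(y)|_{w\sigma_\alpha}$, both already determined by the induction hypothesis. Hence any collection satisfying (1)--(3) must agree with the stable basis, which gives the asserted uniqueness.
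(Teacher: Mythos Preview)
Your proposal is correct and follows precisely the route the paper indicates, namely repeating the argument of Corollary~\ref{characterize -} in the positive chamber; you have also correctly identified the one real adaptation, that the uniqueness induction must run on $l(y)$ with $y$ the label of the basis element rather than on the length of the fixed point. The derivation of property (3) from Proposition~\ref{commutative diagram 1} and Lemma~\ref{image of stab} is exactly what the paper has in mind, and your algebraic check is accurate.
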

The proof is almost the same as the proof of Corollary \ref{characterize -}, so we omit it. We now prove Theorem \ref{restriction -B}. We show that the formula given in Theorem \ref{restriction -B} does not depend on the reduced expression of $y$ and it satisfies the properties in Corollary \ref{characterize -}.

We first give a proof of independence of reduced expression. 

Let $\Lambda$ be the root lattice, and let $\mathcal{A}$ be the algebra over $\mathbb{Q}[\Lambda](\hbar)$ generated by $\{u_w|w\in W\}$, with relations
\[u_wu_y=u_{wy}, u_wf=fu_w,\] 
where $f\in \mathbb{Q}[\Lambda](\hbar)$. For a reduced word $y=\sigma_1\cdots\sigma_l$, define
\[R_{\alpha_1,\cdots,\alpha_l}=\prod\limits_{i=1}^l \left(1+\frac{\beta_i}{\hbar}u_{\sigma_i}\right),\]
where $\beta_i=\sigma_1\cdots\sigma_{i-1}\alpha_i$. Expanding it, we have
\begin{equation}\label{equation P}
R_{\alpha_1,\cdots,\alpha_l}=\frac{\prod\limits_{i=1}^l(\hbar-\beta_i)}{\hbar^l\prod\limits_{\alpha\in R^+} (\alpha-\hbar)}\sum\limits_w \stab_-(w)|_{\sigma_1\cdots\sigma_l}u_w
\end{equation}
where $\stab_-(w)|_{\sigma_1\cdots\sigma_l}$ is given by Theorem \ref{restriction -B}. Thus we only need to prove 
\begin{prop}\label{independence for -}
$R_{\alpha_1,\cdots,\alpha_l}$ does not depend on the reduced expression of $y=\sigma_1\cdots\sigma_l$.
\end{prop}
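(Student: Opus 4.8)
The plan is to reduce the statement to the braid relations of $W$ and then to a finite computation inside each rank-two parabolic. By the theorem of Tits and Matsumoto, any two reduced expressions for $y$ are connected by a sequence of braid moves, each replacing a consecutive string $\underbrace{\sigma_a\sigma_b\sigma_a\cdots}_{m}$ by $\underbrace{\sigma_b\sigma_a\sigma_b\cdots}_{m}$, where $m=m_{ab}$ is the order of $\sigma_a\sigma_b$. Hence it suffices to show that $R_{\alpha_1,\dots,\alpha_l}$ is unchanged under a single such move.

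First I would isolate the effect of a braid move on the ordered product. Write the word as a concatenation $\mathbf v\,\mathbf b\,\mathbf v'$, where $\mathbf b$ is the string being modified; let $w$ be the product of $\mathbf v$ and $z$ the product of $\mathbf b$. Splitting the ordered product at the two boundaries gives $R_{\alpha_1,\dots,\alpha_l}=R_{\mathbf v}\cdot R_{\mathbf b}\cdot R_{\mathbf v'}$. Since the prefix letters are untouched, $R_{\mathbf v}$ is literally the same for both expressions; and since the braid move does not change the element $z$, the roots $\beta_i=\sigma_1\cdots\sigma_{i-1}\alpha_i$ attached to the suffix positions are unchanged as well, so $R_{\mathbf v'}$ also agrees. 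Thus everything reduces to comparing the two middle factors.

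The middle factor has the form $R_{\mathbf b}=\prod_i\bigl(1+\tfrac{w\gamma_i}{\hbar}u_{\sigma_{(i)}}\bigr)$, where $\sigma_{(i)}$ alternates between $\sigma_a$ and $\sigma_b$ and the $\gamma_i$ are the \emph{local} roots of the string $\mathbf b$ computed as if it began from the identity. To strip off the global twist by $w$, I would use the $\mathbb Q$-algebra automorphism $\Phi_w$ of $\mathcal A$ acting on $\mathbb Q[\Lambda](\hbar)$ by $f\mapsto wf$ and fixing every $u_x$; this is well defined precisely because the $u_x$ commute with scalars and $w$ preserves the relations $u_xu_y=u_{xy}$. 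As $\Phi_w$ carries the untwisted product into $R_{\mathbf b}$, equality of the two middle factors will follow once the \emph{untwisted} rank-two identity is established, namely
\[
\prod_{i=1}^{m}\Bigl(1+\tfrac{\gamma_i}{\hbar}u_{\sigma_{(i)}}\Bigr)
=\prod_{i=1}^{m}\Bigl(1+\tfrac{\gamma_i'}{\hbar}u_{\sigma_{(i)}'}\Bigr),
\]
where the left side runs over $\sigma_a\sigma_b\cdots$ and the right over $\sigma_b\sigma_a\cdots$, each of length $m$, with $\gamma_i,\gamma_i'$ the corresponding local roots.

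Finally I would prove this rank-two identity by expanding both sides and comparing the coefficient of each $u_x$, with $x$ ranging over the dihedral group $\langle\sigma_a,\sigma_b\rangle$: the coefficient of $u_x$ is the sum, over subwords spelling $x$, of the product of the associated $\gamma$'s divided by $\hbar^{\ell(x)}$. Here one uses that in a reduced word for the longest element of the dihedral group the local roots $\gamma_1,\dots,\gamma_m$ exhaust the positive roots of the rank-two subsystem, so the two sides involve the same multiset of roots listed in different orders. This is a finite check, one case for each $m\in\{2,3,4,6\}$. \textbf{The main obstacle} is exactly this last step: although $m=2$ and $m=3$ are immediate, the reordering of the roots interacts nontrivially with which subwords spell a given $x$, so the cases $m=4$ and $m=6$ require careful bookkeeping of the dihedral subword expansions rather than a single formal manipulation.
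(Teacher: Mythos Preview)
Your proposal is correct and follows essentially the same route as the paper: reduce to a single braid move via Matsumoto's theorem, factor the product so that only the middle rank-two block matters (your automorphism $\Phi_w$ is exactly what the paper writes, somewhat informally, as $\sigma_1\cdots\sigma_p\,R_{\alpha,\beta,\alpha,\dots}$), and then verify the dihedral identity case by case for $m\in\{2,3,4,6\}$. One small correction and one shortcut you may want: the coefficient of $u_x$ is a sum over \emph{all} subwords spelling $x$, with $\hbar^{-|S|}$ depending on the subword length rather than on $\ell(x)$ (e.g.\ $x=1$ picks up contributions from $\emptyset$ and from pairs $\{i,j\}$ with $\sigma_{(i)}=\sigma_{(j)}$); and for $m=4,6$ the paper saves work by citing Billey's nil-Coxeter computation to handle the reduced subwords, leaving only the non-reduced ones to check by hand.
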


\begin{rem}
In \cite{Billey1997}, Billey proves this case by case when the Weyl group is replaced by the nil-Coxeter group, which is defined by adding the relations $u_{\sigma_{\alpha}}^2=0$ for any simple root $\alpha$. 
\end{rem}
\begin{proof}
Let $\sigma_1'\sigma_2'\cdots\sigma_l'$ be a different reduced expression for $\sigma_1\cdots\sigma_l$ that only differs in positions $p+1,\dots,p+m$, with
\[\sigma_{p+1},\cdots,\sigma_{p+m}=\sigma_{\alpha},\sigma_{\beta},\sigma_{\alpha},\sigma_{\beta},\cdots\]
and
\[\sigma_{p+1}',\cdots,\sigma_{p+m}'=\sigma_{\beta},\sigma_{\alpha},\sigma_{\beta},\sigma_{\alpha},\cdots\]
for some simple roots $\alpha,\beta$, and $m=m(\alpha,\beta)$. It is well-known that every reduced expression can be obtained from any other by a series of transformations of this type.

Since $(1+\sigma_i\sigma u)=\sigma_i(1+\sigma u)$, we have
\[R_{\alpha_1,\cdots,\alpha_l}=R_{\alpha_1,\cdots,\alpha_i}\sigma_1\sigma_2\cdots\sigma_iR_{\alpha_i+1,\cdots,\alpha_l}.\]
Hence,
\[R_{\alpha_1,\cdots,\alpha_l}=R_{\alpha_1,\cdots,\alpha_p}\sigma_1\cdots\sigma_pR_{\alpha,\beta,\alpha,\cdots}\sigma_1\cdots\sigma_p\sigma_{\alpha}\sigma_{\beta}\sigma_{\alpha}\cdots R_{\alpha_{p+m+1},\cdots,\alpha_l},\]
so we only have to prove 
\[R_{\alpha,\beta,\alpha,\cdots}=R_{\beta,\alpha,\beta,\cdots}.\]
We show it case by case. We use letter $\alpha_i$ for $\alpha$, and $\alpha_j$ for $\beta$.

\begin{enumerate}
\item $m=2$. Then $\sigma_i\alpha_j=\alpha_j, \sigma_j\alpha_i=\alpha_i$. Therefore,
\begin{align*}
R_{\alpha_i,\alpha_j}
&=\left(1+\frac{\alpha_i}{\hbar}u_{\sigma_i}\right)\left(1+\frac{\sigma_i\alpha_j}{\hbar}u_{\sigma_j}\right)\\
&=1+\frac{\alpha_i}{\hbar}u_{\sigma_i}+\frac{\alpha_j}{\hbar}u_{\sigma_j}+\frac{\alpha_i\alpha_j}{\hbar^2}u_{\sigma_i\sigma_j}\\
&=R_{\alpha_j,\alpha_i}.
\end{align*}

\item $m=3$. Then 
\[\sigma_i\alpha_j=\sigma_j\alpha_i=\alpha_i+\alpha_j.\]
Therefore,
\begin{align*}
R_{\alpha_i,\alpha_j,\alpha_i}
=&\left(1+\frac{\alpha_i}{\hbar}u_{\sigma_i}\right)\left(1+\frac{\sigma_i\alpha_j}{\hbar}u_{\sigma_j}\right)\left(1+\frac{\sigma_i\sigma_j\alpha_i}{\hbar}u_{\sigma_i}\right)\\
=&1+\frac{\alpha_i\alpha_j}{\hbar^2}+\frac{\alpha_i+\alpha_j}{\hbar}(u_{\sigma_i}+u_{\sigma_j})\\
&+\frac{\alpha_i(\alpha_i+\alpha_j)}{\hbar^2}u_{\sigma_i\sigma_j}+\frac{\alpha_j(\alpha_i+\alpha_j)}{\hbar^2}u_{\sigma_j\sigma_i}+\frac{\alpha_i\alpha_j(\alpha_i+\alpha_j)}{\hbar^3}u_{\sigma_i\sigma_j\sigma_j}\\
=&R_{\alpha_j,\alpha_i,\alpha_j}.
\end{align*}

\item $m=4$. Without loss of generality, assume $\alpha_i$ is the short root. Then 
\begin{align*}
&\sigma_i\sigma_j\sigma_i\alpha_j=\alpha_j, & 
&\sigma_j\sigma_i\sigma_j\alpha_i=\alpha_i,\\
&\sigma_i\sigma_j\alpha_i=\sigma_j\alpha_i=\alpha_i+\alpha_j, &
&\sigma_j\sigma_i\alpha_j=\sigma_i\alpha_j=2\alpha_i+\alpha_j.
\end{align*}
Therefore,
\begin{align*}
R_{\alpha_i,\alpha_j,\alpha_i,\alpha_j}
&=\left(1+\frac{\alpha_i}{\hbar}u_{\sigma_i}\right)\left(1+\frac{\sigma_i\alpha_j}{\hbar}u_{\sigma_j}\right)\left(1+\frac{\sigma_i\sigma_j\alpha_i}{\hbar}u_{\sigma_i}\right)\left(1+\frac{\sigma_i\sigma_j\sigma_i\alpha_j}{\hbar}u_{\sigma_j}\right)\\
&=\left(1+\frac{\alpha_i}{\hbar}u_{\sigma_i}\right)\left(1+\frac{2\alpha_i+\alpha_j}{\hbar}u_{\sigma_j}\right)\left(1+\frac{\alpha_i+\alpha_j}{\hbar}u_{\sigma_i}\right)\left(1+\frac{\alpha_j}{\hbar}u_{\sigma_j}\right)
\end{align*}
and
\begin{align*}
R_{\alpha_j,\alpha_i,\alpha_i,\alpha_j}
&=\left(1+\frac{\alpha_j}{\hbar}u_{\sigma_j}\right)\left(1+\frac{\sigma_j\alpha_i}{\hbar}u_{\sigma_i}\right)\left(1+\frac{\sigma_j\sigma_i\alpha_j}{\hbar}u_{\sigma_j}\right)\left(1+\frac{\sigma_j\sigma_i\sigma_j\alpha_i}{\hbar}u_{\sigma_i}\right)\\
&=\left(1+\frac{\alpha_j}{\hbar}u_{\sigma_j}\right)\left(1+\frac{\alpha_i+\alpha_j}{\hbar}u_{\sigma_i}\right)\left(1+\frac{2\alpha_i+\alpha_j}{\hbar}u_{\sigma_j}\right)\left(1+\frac{\alpha_i}{\hbar}u_{\sigma_i}\right).
\end{align*}

Due to Billey's calculation in \cite{Billey1997}, we only have to compare the coefficients of $1=u_{\sigma_i}^2=u_{\sigma_j}^2, u_{\sigma_i}=u_{\sigma_j}^2u_{\sigma_i}=u_{\sigma_i}u_{\sigma_j}^2$, and $u_{\sigma_j}=u_{\sigma_i}^2u_{\sigma_j}=u_{\sigma_j}u_{\sigma_i}^2$. It is easy to see this by a direct calculation.

\item $m=6$. Without loss of generality, assume $\alpha_i$ is the short root. Then 
\begin{align*}
&\sigma_i\sigma_j\sigma_i\sigma_j\sigma_i\alpha_j=\alpha_j, & 
&\sigma_j\sigma_i\sigma_j\sigma_i\sigma_j\alpha_i=\alpha_i,\\
&\sigma_i\sigma_j\sigma_i\sigma_j\alpha_i=\sigma_j\alpha_i=\alpha_i+\alpha_j, & 
&\sigma_j\sigma_i\sigma_j\sigma_i\alpha_j=\sigma_i\alpha_j=3\alpha_i+\alpha_j,\\
&\sigma_i\sigma_j\sigma_i\alpha_j=\sigma_j\sigma_i\alpha_j=3\alpha_i+2\alpha_j, &
&\sigma_j\sigma_i\sigma_j\alpha_i=\sigma_i\sigma_j\alpha_i=2\alpha_i+\alpha_j.
\end{align*}
Therefore,
\begin{align*}
R_{\alpha_i,\alpha_j,\alpha_i,\alpha_j, \alpha_i, \alpha_j}
=&\left(1+\frac{\alpha_i}{\hbar}u_{\sigma_i}\right)\left(1+\frac{\sigma_i\alpha_j}{\hbar}u_{\sigma_j}\right)\left(1+\frac{\sigma_i\sigma_j\alpha_i}{\hbar}u_{\sigma_i}\right)\\
&\left(1+\frac{\sigma_i\sigma_j\sigma_i\alpha_j}{\hbar}u_{\sigma_j}\right)\left(1+\frac{\sigma_i\sigma_j\sigma_i\sigma_j\alpha_i}{\hbar}u_{\sigma_i}\right)\left(1+\frac{\sigma_i\sigma_j\sigma_i\sigma_j\sigma_i\alpha_j}{\hbar}u_{\sigma_j}\right)\\
=&\left(1+\frac{\alpha_i}{\hbar}u_{\sigma_i}\right)\left(1+\frac{3\alpha_i+\alpha_j}{\hbar}u_{\sigma_j}\right)\left(1+\frac{2\alpha_i+\alpha_j}{\hbar}u_{\sigma_i}\right)\\
&\left(1+\frac{3\alpha_i+2\alpha_j}{\hbar}u_{\sigma_j}\right)\left(1+\frac{\alpha_i+\alpha_j}{\hbar}u_{\sigma_i}\right)\left(1+\frac{\alpha_j}{\hbar}u_{\sigma_j}\right)
\end{align*}
and
\begin{align*}
R_{\alpha_j,\alpha_i,\alpha_j,\alpha_i,\alpha_j,\alpha_i}=&\left(1+\frac{\alpha_j}{\hbar}u_{\sigma_j}\right)\left(1+\frac{\alpha_i+\alpha_j}{\hbar}u_{\sigma_i}\right)\left(1+\frac{3\alpha_i+2\alpha_j}{\hbar}u_{\sigma_j}\right)\\
&\left(1+\frac{2\alpha_i+\alpha_j}{\hbar}u_{\sigma_i}\right)\left(1+\frac{3\alpha_i+\alpha_j}{\hbar}u_{\sigma_j}\right)\left(1+\frac{\alpha_i}{\hbar}u_{\sigma_i}\right).
\end{align*}
Similarly to Case (3), we can show the coefficients of the corresponding terms are the same.
\end{enumerate}
\end{proof}

Now we prove the other part.
\begin{prop}
The formula in Theorem \ref{restriction -B} satisfies the properties in Corollary \ref{characterize -}.
\end{prop}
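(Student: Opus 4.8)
The plan is to verify the three properties of Corollary \ref{characterize -} one at a time, evaluating the closed formula \eqref{formula restriction -B} on the chosen reduced word $y=\sigma_1\cdots\sigma_l$; this is legitimate because Proposition \ref{independence for -} guarantees the right-hand side does not depend on the reduced expression. Property (1) is immediate: the sum in \eqref{formula restriction -B} ranges over subwords of $\sigma_1\cdots\sigma_l$ equal to $w$, and by the Subword Property of the Bruhat order such a subword exists if and only if $w\leq y$; otherwise the sum is empty and $\stab_-(w)|_y=0$.

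For property (2) I would set $y=w$ with $w=\sigma_1\cdots\sigma_l$ reduced. The only subword of length $k$ that can equal $w$ is the full word, forcing $k=l$, $i_j=j$, so the sum collapses to $\prod_{j=1}^l\beta_j$ and the formula reads $(-1)^l\prod_{\gamma\in R^+\setminus R(w)}(\gamma-\hbar)\prod_{j=1}^l\beta_j$. What remains is a root-system identity. Recall that $R(w)=\{\beta_i\}$ is precisely the inversion set $\{\gamma\in R^+\mid w^{-1}\gamma\in R^-\}$, which has $l(w)=l$ elements. Substituting $\gamma=w\alpha$ sets up a bijection between $\{\alpha\in R^+\mid w\alpha\in R^+\}$ and $R^+\setminus R(w)$, whence $\prod_{\alpha\in R^+,\,w\alpha>0}(w\alpha-\hbar)=\prod_{\gamma\in R^+\setminus R(w)}(\gamma-\hbar)$, and the complementary bijection $\alpha\mapsto-w\alpha$ gives $\prod_{\alpha\in R^+,\,w\alpha<0}w\alpha=(-1)^l\prod_{\delta\in R(w)}\delta=(-1)^l\prod_{j}\beta_j$. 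Multiplying these matches the displayed value of $\stab_-(w)|_w$.

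The heart of the argument is property (3), the recursion. Here I would extend the reduced word by putting $\alpha_{l+1}=\alpha$ and $\sigma_{l+1}=\sigma_\alpha$; since $l(y\sigma_\alpha)=l(y)+1$ the word $\sigma_1\cdots\sigma_l\sigma_{l+1}$ is a reduced expression for $y\sigma_\alpha$, with $\beta_{l+1}=y\alpha\in R^+$. Because $y\alpha\in R^+\setminus R(y)$ while $y\alpha\in R(y\sigma_\alpha)$, the prefactor obeys $\prod_{\gamma\in R^+\setminus R(y\sigma_\alpha)}(\gamma-\hbar)=(y\alpha-\hbar)^{-1}\prod_{\gamma\in R^+\setminus R(y)}(\gamma-\hbar)$. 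I then split the subword sum defining $\stab_-(w)|_{y\sigma_\alpha}$ according to whether a subword uses the final letter $\sigma_{l+1}$. Subwords avoiding it are exactly the subwords of $\sigma_1\cdots\sigma_l$ equal to $w$, each carrying an extra factor $\hbar$ from the raised power $\hbar^{(l+1)-k}$; subwords using it are the subwords of $\sigma_1\cdots\sigma_l$ equal to $w\sigma_\alpha$, each carrying the factor $\beta_{l+1}=y\alpha$. Writing $S_w$ and $S_{w\sigma_\alpha}$ for the two subword sums attached to the word $\sigma_1\cdots\sigma_l$, this split yields $\stab_-(w)|_{y\sigma_\alpha}=(-1)^{l+1}(y\alpha-\hbar)^{-1}P\,(\hbar S_w+y\alpha\,S_{w\sigma_\alpha})$, where $P=\prod_{\gamma\in R^+\setminus R(y)}(\gamma-\hbar)$, and this is exactly $-\tfrac{\hbar}{y\alpha-\hbar}\stab_-(w)|_y-\tfrac{y\alpha}{y\alpha-\hbar}\stab_-(w\sigma_\alpha)|_y$.

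I expect the only real obstacle to be bookkeeping rather than substance: tracking the sign $(-1)^{l(y)}$ and, above all, the exponent of $\hbar$ through the subword split of property (3), together with the inversion-set identification underlying property (2). Both computations are routine once the reduced word is fixed and the subwords are sorted by their last letter, so the argument reduces to a careful but direct verification.
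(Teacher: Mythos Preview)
Your proposal is correct and follows essentially the same approach as the paper: property (2) uses the same inversion-set identity, and your subword split for property (3) is exactly the combinatorial content of the paper's factorization $R_{\alpha_1,\dots,\alpha_l,\alpha}=R_{\alpha_1,\dots,\alpha_l}\cdot yR_\alpha$ read through Equation~\eqref{equation P}. For property (1) you invoke the Subword Property directly, which is cleaner than the paper's terse reference to Theorem~\ref{stable theorem B}, but the substance is the same.
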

\begin{proof}
\leavevmode
\begin{enumerate}
\item Property (a) follows from Theorem \ref{stable theorem B}.
\item Property (b) follows from the fact 
\[
\{y\alpha|y\alpha\in -R^+,\alpha\in R^+\}=\{-\beta_i|1\leq i\leq l\}.
\]
\item 
Property (c) follows from Proposition \ref{independence for -} as follows. Suppose $y=\sigma_1\cdots\sigma_l$ reduced and $l(y\sigma_{\alpha})=l(y)+1$. Then 
\[R_{\alpha_1,\cdots,\alpha_l,\alpha}=R_{\alpha_1,\cdots,\alpha_l} yR_{\alpha}.\]
Using Equation (\ref{equation P}), we get 
\[\frac{\hbar-y\alpha}{\hbar}\stab_-(w)|_{y\sigma_{\alpha}}=\stab_-(w)|_y+\frac{y\alpha}{\hbar}\stab_-({w\sigma_{\alpha
}})|_y,\]
which is precisely property (3) in corollary \ref{characterize -}.
\end{enumerate}
\end{proof}
This finishes the proof of Theorem \ref{restriction -B}. We give a corollary of it.
\begin{cor}\label{mod h^2 B 1}
\[\stab_-(w)|_y \equiv \left\{\begin{array}{ccc}
\displaystyle (-1)^{l(y)+1}\frac{\hbar\prod\limits_{\alpha\in R^+}\alpha}{y\beta} & \pmod{\hbar^2} & \text{if } w=y\sigma_{\beta}<y \text{ for some } \beta\in R^+,\\
\\
0 & \pmod{\hbar^2} & \text{otherwise}.
\end{array}\right.\]
\end{cor}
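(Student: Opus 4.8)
The plan is to read the congruence directly off the closed formula of Theorem \ref{restriction -B}, working modulo $\hbar^2$. Write that formula as
\[
\stab_-(w)|_y=(-1)^{l(y)}\,P(\hbar)\,S(\hbar),\qquad
P(\hbar)=\prod_{\alpha\in R^+\setminus R(y)}(\alpha-\hbar),\qquad
S(\hbar)=\sum_{\substack{1\leq i_1<\cdots<i_k\leq l\\ w=\sigma_{i_1}\cdots\sigma_{i_k}}}\hbar^{\,l-k}\prod_{j=1}^k\beta_{i_j}.
\]
Assume $w\neq y$ (for $w=y$ the diagonal restriction has a nonzero constant term, so the statement tacitly concerns $w\neq y$). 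Then the only length-$l$ subword of a reduced word is the full word, which equals $y\neq w$, so every subword realizing $w$ has $k\le l-1$ and $S(\hbar)$ is divisible by $\hbar$. Hence modulo $\hbar^2$ the product $P(\hbar)S(\hbar)$ reduces to $P_0\,S_1$, where $P_0=\prod_{\alpha\in R^+\setminus R(y)}\alpha$ is the constant term of $P(\hbar)$ and $S_1$ is the coefficient of $\hbar^1$ in $S(\hbar)$, i.e. the sum over length-$(l-1)$ subwords (single deletions); each deletion of a position $i_0$ contributes $\prod_{j\ne i_0}\beta_j=(\prod_{j=1}^l\beta_j)/\beta_{i_0}$.

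The combinatorial heart is to classify which single deletions reproduce $w$. First I would check that deleting position $i_0$ gives $\sigma_1\cdots\widehat{\sigma_{i_0}}\cdots\sigma_l=\sigma_{\beta_{i_0}}y$, so this subword equals $w$ exactly when $wy^{-1}=\sigma_{\beta_{i_0}}$. Because $\beta_1,\dots,\beta_l$ are pairwise distinct (they are precisely the positive roots constituting the inversion set $R(y)$), there is \emph{at most one} such $i_0$, and one exists iff $wy^{-1}$ is the reflection in a root of $R(y)$. Writing $w=y\sigma_\beta$ with $\beta\in R^+$ gives $wy^{-1}=\sigma_{y\beta}$; the positive root of this reflection lies in $R(y)$ iff $y\beta\in-R^+$, that is, iff $w=y\sigma_\beta<y$, in which case the unique deleted index satisfies $\beta_{i_0}=-y\beta$.

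Finally I would assemble the pieces. Since $R(y)=\{\beta_1,\dots,\beta_l\}$ and $R^+\setminus R(y)$ together partition $R^+$, we have $P_0\cdot\prod_{j=1}^l\beta_j=\prod_{\alpha\in R^+}\alpha$. Combining this with $\beta_{i_0}=-y\beta$, the surviving term is
\[
(-1)^{l(y)}\,\hbar\,P_0\,\frac{\prod_{j=1}^l\beta_j}{\beta_{i_0}}
=(-1)^{l(y)}\,\hbar\,\frac{\prod_{\alpha\in R^+}\alpha}{-y\beta}
=(-1)^{l(y)+1}\frac{\hbar\prod_{\alpha\in R^+}\alpha}{y\beta},
\]
which is the first case; and when $w$ is not of the form $y\sigma_\beta<y$ (so $wy^{-1}$ is not a reflection with root in $R(y)$) no single deletion yields $w$, whence $S_1=0$ and $\stab_-(w)|_y\equiv0\pmod{\hbar^2}$. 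I expect the main obstacle to be the middle paragraph: computing that a single deletion produces $\sigma_{\beta_{i_0}}y$, using distinctness of the $\beta_i$ to force uniqueness of $i_0$, and tracking the sign so that $\beta_{i_0}=-y\beta$ (the source of the extra $(-1)$); the manipulations with $P(\hbar)$ and the product over $R^+$ are then routine.
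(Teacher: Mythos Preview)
Your argument is correct and follows essentially the same route as the paper: read off the $\hbar^{l-k}$ weighting in Theorem~\ref{restriction -B}, observe that modulo $\hbar^2$ only single deletions $\sigma_1\cdots\widehat{\sigma_i}\cdots\sigma_l$ survive, identify such a deletion with $w=y\sigma_\beta<y$ where $\beta_i=-y\beta$, and combine $\prod_{j}\beta_j$ with $\prod_{\alpha\in R^+\setminus R(y)}\alpha$ to obtain $\prod_{\alpha\in R^+}\alpha$. Your write-up is in fact more explicit than the paper's on the uniqueness of the deletion index (via distinctness of the $\beta_i$) and on the caveat $w\neq y$.
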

\begin{proof}
Theorem \ref{restriction -B} implies that $\stab_-(w)|_y \pmod{\hbar^2}$ is nonzero if and only if $w=\sigma_1\cdots\hat{\sigma_i}\cdots\sigma_l$ for some $i$. Then $w=y\sigma_{\beta}$ with $\beta=\sigma_l\cdots\sigma_{i+1}\alpha_i$ and $\beta_i=-y\beta$. And every element $w=y\sigma_{\beta}$ such that $w<y$ is of the form $\sigma_1\cdots\hat{\sigma_i}\cdots\sigma_l$ for some $i$. Putting these into Equation (\ref{formula restriction -B}) gives the desired result.
\end{proof}

\subsection{Proof of Theorem \ref{restriction B}}
By Corollary \ref{characterize +}, we only have to prove that the formula given by Theorem \ref{restriction B} satisfies those properties.

Let us consider the semidirect product $Q\rtimes W$, where $Q$ is the quotient field of $H_T^*(\text{pt})$. For any element $w\in W$, let $u_w$ denote it. The action of $W$ on $Q$ is induced from the action of $W$ on the Lie algebra of the maximal torus $A$, and $W$ acts trivially on $\hbar$. For example, for any root $\alpha$, $u_w\frac{\hbar+\alpha}{\alpha}=\frac{\hbar+w\alpha}{w\alpha}u_w$. 

For any reduced decomposition $\sigma_{\alpha_1}\cdots\sigma_{\alpha_l}$ of $y$, we define the following function on $W$
\[\xi_{\alpha_1,\alpha_2,\cdots,\alpha_l}(w)=\sum\limits_{\substack{1\leq i_1<i_2<\dots<i_k\leq l\\ 
w=\sigma_{i_1}\sigma_{i_2}\dots\sigma_{i_k}}}\prod\limits_{j=1}^k\frac{\sigma_{i_1}\sigma_{i_2}\dots\sigma_{i_j}\alpha_{i_j}-\hbar}{\sigma_{i_1}\sigma_{i_2}\dots\sigma_{i_j}\alpha_{i_j}}\frac{\hbar^{l-k}}{\prod\limits_{j=0}^k\prod\limits_{i_j<r<i_{j+1}}\sigma_{i_1}\sigma_{i_2}\dots\sigma_{i_j}\alpha_r}.\]
Notice that the element
\[R_{\alpha_1,\alpha_2,\cdots,\alpha_l}:=\prod\limits_{i=1}^l \left(\frac{\hbar}{\alpha_i}+\frac{\hbar+\alpha_i}{\alpha_i}u_{\sigma_i}\right)\]
of $Q\rtimes W$ has the expansion
\begin{equation}\label{equation}
R_{\alpha_1,\alpha_2,\cdots,\alpha_l}=\sum_{w\in W}\xi_{\alpha_1,\alpha_2,\cdots,\alpha_l}(w) u_w.
\end{equation}
Similarly to Proposition \ref{independence for -}, we have
\begin{prop}\label{independence +}
$R_{\alpha_1,\alpha_2,\cdots,\alpha_l}$ does not depend on the choice of the reduced expression for $y$.
\end{prop}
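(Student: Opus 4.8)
The plan is to follow the proof of Proposition \ref{independence for -} almost verbatim, the one structural simplification being that the semidirect product $Q\rtimes W$ already encodes the twisting by earlier reflections, so no auxiliary conjugation is required. Write $T_i=\frac{\hbar}{\alpha_i}+\frac{\hbar+\alpha_i}{\alpha_i}u_{\sigma_i}$, so that $R_{\alpha_1,\dots,\alpha_l}=T_1T_2\cdots T_l$, and observe that each $T_i$ depends only on the simple root $\alpha_i$ attached to the reflection in position $i$ (the root is not pre-twisted, in contrast with the $\beta_i$ of the $-$ case). A short computation in $Q\rtimes W$ using $u_{\sigma_\gamma}f=(\sigma_\gamma f)u_{\sigma_\gamma}$, $u_{\sigma_\gamma}^2=1$ and $\sigma_\gamma\gamma=-\gamma$ shows that $T_\gamma^2=1$; thus the $T_\gamma$ are involutions, and the proposition is precisely the assertion that they also satisfy the braid relations.

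By the word property of Coxeter groups, any two reduced expressions for $y$ are linked by a chain of braid moves, each replacing a consecutive block $\sigma_\alpha\sigma_\beta\sigma_\alpha\cdots$ by $\sigma_\beta\sigma_\alpha\sigma_\beta\cdots$ of the same length $m=m(\alpha,\beta)$. If such a move acts on positions $p+1,\dots,p+m$, then by associativity
\[
R_{\alpha_1,\dots,\alpha_l}=\left(T_1\cdots T_p\right)\,R_{\alpha,\beta,\alpha,\dots}\,\left(T_{p+m+1}\cdots T_l\right),
\]
and the same identity holds for the braided word with $R_{\alpha,\beta,\alpha,\dots}$ replaced by $R_{\beta,\alpha,\beta,\dots}$. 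Since the outer factors coincide, the entire statement reduces to the single dihedral identity
\[
R_{\alpha,\beta,\alpha,\dots}=R_{\beta,\alpha,\beta,\dots}\qquad(m\text{ factors on each side}).
\]

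To establish this I would expand both products in $Q\rtimes W$, pushing every $u_w$ to the right via $u_wf=(wf)u_w$ and collecting the coefficient of each group element $u_v$. The scalars that appear are the values $\sigma_{i_1}\cdots\sigma_{i_j}\gamma$ with $\gamma\in\{\alpha,\beta\}$, all lying inside the rank-two system $\langle\alpha,\beta\rangle$ and given by the explicit root tables already tabulated in the four cases $m=2,3,4,6$ of Proposition \ref{independence for -}. Matching coefficients is then a finite check. Exactly as in the $-$ case, Billey's calculation in \cite{Billey1997} shows that the contributions of the reduced subwords to every coefficient already coincide for the two words; what remains is to match the contributions of the non-reduced subwords, where the relation $u_{\sigma_\gamma}^2=1$ forces a collapse onto shorter elements. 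For $m=4$ these collapses reach only $1,\,u_{\sigma_\alpha},\,u_{\sigma_\beta}$, and for the remaining types they reach a similarly short list, so in each case only finitely many coefficients are verified by direct substitution. Comparing the resulting expansion with \eqref{equation} then identifies the coefficient of $u_w$ with $\xi_{\alpha_1,\dots,\alpha_l}(w)$, which is the independence asserted.

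I expect the only genuine difficulty to be the arithmetic of the $m=6$ ($G_2$) case, where the six nested reflections generate the longest root expressions and the largest family of monomials $u_v$ receiving collapsed contributions; as in Proposition \ref{independence for -}, the dihedral reduction keeps this bounded and Billey's argument removes all reduced-subword coefficients, leaving only a handful of low-length coefficients to check by hand. Once the proposition is proved, it feeds into Theorem \ref{restriction B} in the same way the $-$ version did: the factorization $R_{\alpha_1,\dots,\alpha_l,\alpha}=R_{\alpha_1,\dots,\alpha_l}\,u_y\,R_\alpha$ reproduces property (3) of Corollary \ref{characterize +}.
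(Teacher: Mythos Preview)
Your argument is correct and follows exactly the route the paper indicates (``case by case as in Proposition \ref{independence for -}''): reduce to a single braid move via the word property, then compare coefficients in the rank-two dihedral product, invoking Billey's computation for the reduced subwords and checking the few collapsed terms by hand; your observation that the $T_\gamma$ are already untwisted (so no conjugation is needed) and satisfy $T_\gamma^2=1$ is a pleasant simplification over the $-$ case.  One small slip in your closing remark: in the $+$ setting the recursion is simply $R_{\alpha_1,\dots,\alpha_l,\alpha}=R_{\alpha_1,\dots,\alpha_l}\,R_\alpha$ with no intervening $u_y$, since each factor $T_i$ depends only on $\alpha_i$ and not on the prefix---this is precisely the simplification you noted earlier, and it is what the paper uses to recover property (3) of Corollary \ref{characterize +}.
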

This can be checked case by case as in Proposition \ref{independence for -}. We leave the details to the interested reader. Finally, we can prove the Theorem \ref{restriction B}.
\begin{prop}
The formula in Theorem \ref{restriction B} satisfies the properties in Corollary \ref{characterize +}.
\end{prop}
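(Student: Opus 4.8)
The plan is to check the three properties of Corollary \ref{characterize +} directly for the candidate function, exploiting the identity
\[
\stab_+(y)|_w=(-1)^l\Big(\prod_{\alpha\in R^+}\alpha\Big)\,\xi_{\alpha_1,\alpha_2,\cdots,\alpha_l}(w),
\]
which is how Theorem \ref{restriction B} is packaged via Equation (\ref{equation}). By Proposition \ref{independence +}, the element $R_{\alpha_1,\alpha_2,\cdots,\alpha_l}$, and hence each coefficient $\xi_{\alpha_1,\alpha_2,\cdots,\alpha_l}(w)$, is independent of the reduced word chosen for $y$; this is what makes the formula an honest function of $w$ and, crucially, lets me select whichever reduced expression is convenient in each step. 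Property (1) is then immediate: the sum defining $\xi$ ranges over subwords $\sigma_{i_1}\cdots\sigma_{i_k}$ of the fixed reduced word $\sigma_1\cdots\sigma_l$ that equal $w$, and by the subword characterization of the Bruhat order such subwords exist only when $w\le y$; otherwise the sum is empty and $\stab_+(y)|_w=0$.

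For Property (2) I specialize to $w=y$. Since $\sigma_1\cdots\sigma_l$ is reduced of length $l$, the only subword representing $y$ is the full word, so $k=l$, $i_j=j$, the factor $\hbar^{l-k}$ is $1$, and all the gap-products over $i_j<r<i_{j+1}$ are empty. Using $\sigma_1\cdots\sigma_j\alpha_j=-\beta_j$ (because $\sigma_j\alpha_j=-\alpha_j$), the surviving term collapses to $(-1)^l\big(\prod_{\alpha\in R^+}\alpha\big)\prod_{j=1}^l\frac{\beta_j+\hbar}{\beta_j}$. To match the product in Corollary \ref{characterize +}(2), I split $R^+$ by the sign of $y\alpha$ and invoke the identity $\{y\alpha\mid \alpha\in R^+,\ y\alpha\in -R^+\}=\{-\beta_i\}$ already used for the negative chamber, together with its consequence that the positive part of $y(R^+)$ is $R^+\setminus\{\beta_i\}$. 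This turns $\prod_{\alpha>0,\,y\alpha<0}(y\alpha-\hbar)\prod_{\alpha>0,\,y\alpha>0}y\alpha$ into exactly the same expression, establishing (2).

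Property (3) is the heart of the matter and is where Proposition \ref{independence +} is genuinely used. Assuming $l(y\sigma_\alpha)=l(y)+1$, I take the reduced word $\sigma_1\cdots\sigma_l\sigma_\alpha$ for $y\sigma_\alpha$, so that by definition $R_{\alpha_1,\cdots,\alpha_l,\alpha}=R_{\alpha_1,\cdots,\alpha_l}\big(\tfrac{\hbar}{\alpha}+\tfrac{\hbar+\alpha}{\alpha}u_{\sigma_\alpha}\big)$. Expanding both sides with Equation (\ref{equation}) and commuting $u_v$ past scalars via $u_v f=(v\cdot f)u_v$, the coefficient of $u_w$ on the right gathers the $v=w$ contribution (with factor $\hbar/w\alpha$) and the $v=w\sigma_\alpha$ contribution (with factor $\frac{\hbar+(w\sigma_\alpha)\alpha}{(w\sigma_\alpha)\alpha}$). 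Since $(w\sigma_\alpha)\alpha=-w\alpha$, the latter simplifies to $\frac{w\alpha-\hbar}{w\alpha}$, yielding
\[
\xi_{\alpha_1,\cdots,\alpha_l,\alpha}(w)=\frac{\hbar}{w\alpha}\,\xi_{\alpha_1,\cdots,\alpha_l}(w)+\frac{w\alpha-\hbar}{w\alpha}\,\xi_{\alpha_1,\cdots,\alpha_l}(w\sigma_\alpha).
\]
Multiplying through by $(-1)^{l+1}\prod_{\alpha\in R^+}\alpha$ converts the two $\xi$'s on the right into $-\stab_+(y)|_w$ and $-\stab_+(y)|_{w\sigma_\alpha}$, reproducing the recursion of Corollary \ref{characterize +}(3) with both minus signs. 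The step is essentially algebraic, so I expect the only real obstacle to be careful bookkeeping: tracking the parity change $(-1)^l\to(-1)^{l+1}$ and confirming that $\frac{\hbar+(w\sigma_\alpha)\alpha}{(w\sigma_\alpha)\alpha}$ collapses to $\frac{w\alpha-\hbar}{w\alpha}$, which is precisely the source of the signs in the stated formula. Having verified all three properties, uniqueness in Corollary \ref{characterize +} forces the formula to compute $\stab_+(y)|_w$, completing the proof.
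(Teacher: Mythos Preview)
Your proof is correct and follows essentially the same route as the paper: Property (1) via the subword characterization of Bruhat order, Property (2) via the identification of $\{y\alpha:\alpha>0,\,y\alpha<0\}$ with $\{\sigma_1\cdots\sigma_j\alpha_j\}=\{-\beta_j\}$, and Property (3) via the factorization $R_{\alpha_1,\cdots,\alpha_l,\alpha}=R_{\alpha_1,\cdots,\alpha_l}\big(\tfrac{\hbar}{\alpha}+\tfrac{\hbar+\alpha}{\alpha}u_{\sigma_\alpha}\big)$ together with Equation~(\ref{equation}). Your write-up is somewhat more explicit than the paper's (in particular the sign bookkeeping $(-1)^l\to(-1)^{l+1}$ and the simplification $(w\sigma_\alpha)\alpha=-w\alpha$), but the argument is the same.
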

\begin{proof}
\leavevmode
\begin{enumerate}
\item
Property (a) is clear due to Proposition 8.5.5 in \cite{Springer2010}.
\item 
Let $R(y):=\{\alpha\in R^+|y\alpha<0\}$. Then Lemma 8.3.2 in 
\cite{Springer2010} gives
\[R(y)=\{\alpha_l,\sigma_l\alpha_{l-1},\cdots,\sigma_l\sigma_{l-1}\cdots\sigma_2\alpha_1\}.\]
Therefore
\[\{y\alpha|\alpha\in R^+, y\alpha<0\}=\{\sigma_1\cdots\sigma_j\alpha_j|1\leq j\leq l\}.\]
This implies property (b).
\item
Since
\[R_{\alpha_1,\alpha_2,\cdots,\alpha_l,\alpha}=R_{\alpha_1,\alpha_2,\cdots,\alpha_l}\left(\frac{\hbar}{\alpha}+\frac{\hbar+\alpha}{\alpha}u_{\sigma_{\alpha}}\right),\]
Equation (\ref{equation}) gives
\begin{align*}
\sum_w \xi_{\alpha_1,\alpha_2,\cdots,\alpha_l,\alpha}(w) u_w
&=\sum_{w'} \xi_{\alpha_1,\alpha_2,\cdots,\alpha_l}(w') u_{w'}\left(\frac{\hbar}{\alpha}+\frac{\hbar+\alpha}{\alpha}u_{\sigma_{\alpha}}\right)\\
&=\sum_w \left(\frac{\hbar}{w\alpha}\xi_{\alpha_1,\alpha_2,\cdots,\alpha_l}(w)+\frac{w\alpha-\hbar}{w\alpha}\xi_{\alpha_1,\alpha_2,\cdots,\alpha_l}(w\sigma_{\alpha})\right)u_w.
\end{align*}
Hence
\[\xi_{\alpha_1,\alpha_2,\cdots,\alpha_l,\alpha}(w)=\frac{\hbar}{w\alpha}\xi_{\alpha_1,\alpha_2,\cdots,\alpha_l}+\frac{w\alpha-\hbar}{w\alpha}\xi_{\alpha_1,\alpha_2,\cdots,\alpha_l}(w\sigma_{\alpha})\]
which is precisely property (c).
\end{enumerate}
\end{proof}

As in Corollary \ref{mod h^2 B 1}, modulo $\hbar^2$, we get
\begin{cor}\label{mod h^2 B}
\[\stab_+(y)|_w \equiv \left\{\begin{array}{ccc}
\displaystyle (-1)^{l(y)+1}\frac{\hbar\prod\limits_{\alpha\in R^+}\alpha}{y\beta} & \pmod{\hbar^2} &\text{if } w=y\sigma_{\beta}<y \text{ for some } \beta\in R^+,\\
\\
0 & \pmod{\hbar^2} & \text{otherwise}.
\end{array}\right.\]
\end{cor}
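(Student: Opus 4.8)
The plan is to mirror the proof of Corollary \ref{mod h^2 B 1}, working directly from the explicit formula in Theorem \ref{restriction B}. Fix a reduced word $y=\sigma_1\cdots\sigma_l$. In that formula the summand attached to a subword $\sigma_{i_1}\cdots\sigma_{i_k}=w$ carries an explicit factor $\hbar^{l-k}$, while the remaining factors — namely $\prod_j(\sigma_{i_1}\cdots\sigma_{i_j}\alpha_{i_j}-\hbar)/(\sigma_{i_1}\cdots\sigma_{i_j}\alpha_{i_j})$, the denominator $\prod_{j}\prod_{i_j<r<i_{j+1}}\sigma_{i_1}\cdots\sigma_{i_j}\alpha_r$, and $\prod_{\alpha\in R^+}\alpha$ — are regular at $\hbar=0$ (products and inverses of honest roots, together with a polynomial in $\hbar$). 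Hence a subword obtained from the reduced word by $l-k$ deletions contributes a term of $\hbar$-order at least $l-k$, so modulo $\hbar^2$ only subwords with at most one deletion survive. The zero-deletion case occurs only for $w=y$ and reproduces the diagonal entry recorded by property (b) of Corollary \ref{characterize +}; I therefore focus on the off-diagonal entries $w\neq y$.

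Next I would identify the one-deletion subwords. Deleting position $i$ produces $\sigma_1\cdots\widehat{\sigma_i}\cdots\sigma_l=y\sigma_\beta$ with $\beta=\sigma_l\cdots\sigma_{i+1}\alpha_i$. By Lemma 8.3.2 of \cite{Springer2010}, the roots $\{\sigma_l\cdots\sigma_{i+1}\alpha_i:1\le i\le l\}$ are exactly the inversion set $R(y)=\{\alpha\in R^+:y\alpha<0\}$, listed without repetition, so $i\mapsto\beta$ is a bijection onto $R(y)$ and every one-deletion subword represents an element $y\sigma_\beta$ with $\beta\in R^+$ and $y\beta<0$, i.e. $y\sigma_\beta<y$; conversely each such $w=y\sigma_\beta<y$ comes from exactly one deletion position $i$. (Note that the one-deletion word need not be reduced, so $w$ may be much shorter than $l-1$; this is harmless, since only the group element it represents matters.) Consequently, if $w\neq y$ and $w$ is not of the form $y\sigma_\beta<y$, then no zero- or one-deletion subword equals $w$, every surviving summand is divisible by $\hbar^2$, and $\stab_+(y)|_w\equiv 0\pmod{\hbar^2}$, which is the ``otherwise'' alternative.

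It remains to evaluate the unique one-deletion summand when $w=y\sigma_\beta<y$. Deleting position $i$ creates a single gap (at $r=i$), so the denominator collapses to the single factor $\sigma_1\cdots\sigma_{i-1}\alpha_i=\beta_i$; and since $\prod_j(\gamma_j-\hbar)/\gamma_j\equiv 1\pmod{\hbar}$ with $\gamma_j=\sigma_{i_1}\cdots\sigma_{i_j}\alpha_{i_j}$, after multiplying by the explicit $\hbar$ its $\hbar$-linear correction drops out modulo $\hbar^2$. This leaves $(-1)^l\hbar\prod_{\alpha\in R^+}\alpha/\beta_i$. A short computation gives $y\beta=\sigma_1\cdots\sigma_i\alpha_i=-\beta_i$, hence $\beta_i=-y\beta$, and the summand becomes $(-1)^{l(y)+1}\hbar\prod_{\alpha\in R^+}\alpha/(y\beta)$, as claimed.

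The only genuine bookkeeping obstacle is the second step: checking that every one-deletion subword is accounted for by exactly one inversion $\beta\in R(y)$, and that no multi-deletion subword can drop below order $\hbar^2$. Both follow from the uniform $\hbar^{l-k}$ weighting together with the standard description of $R(y)$; everything else is a direct specialization of the formula in Theorem \ref{restriction B} at $\hbar=0$, exactly parallel to Corollary \ref{mod h^2 B 1}.
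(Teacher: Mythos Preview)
Your proof is correct and follows essentially the same approach as the paper, which simply remarks that the result follows from the proof of Corollary~\ref{mod h^2 B 1} applied to Theorem~\ref{restriction B}. You have spelled out exactly that argument: isolate the $\hbar^{l-k}$ weighting, keep only one-deletion subwords, identify them bijectively with $\{y\sigma_\beta<y\}$ via $\beta=\sigma_l\cdots\sigma_{i+1}\alpha_i\in R(y)$, and evaluate the surviving term using $\beta_i=-y\beta$.
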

The follows from the proof of Corollary \ref{mod h^2 B 1} and Theorem \ref{restriction B}. 

\section{Applications}

\subsection{Restriction of stables basis for $T^*\mathcal{P}$}

In this subsection, we extend Theorems \ref{restriction -B} and \ref{restriction B} to $T^*\mathcal{P}$ case. In type $A$, these formulas were also obtained in \cite{Shenfeld2013} via a process called Abelianization. 

Before we state the theorem, we record a useful lemma from \cite{Bernstein1973a}.
\begin{lem}
Each coset $W/W_P$ contains exactly one element of minimal length, which is characterized by the property that it maps $I$ into $R^+$.
\end{lem}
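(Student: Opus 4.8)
The final statement to prove is the following:

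\begin{lem}
Each coset $W/W_P$ contains exactly one element of minimal length, which is characterized by the property that it maps $I$ into $R^+$.
\end{lem}

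The plan is to establish existence and uniqueness of a minimal-length coset representative, and then to identify this representative via its action on the simple roots in $I$. Throughout I will write $\ell$ for the length function on $W$ and recall that for any $w\in W$ one has $\ell(w)=|R(w)|$, where $R(w)=\{\alpha\in R^+\mid w\alpha<0\}$ is the inversion set; this is the standard fact used in \cite{Springer2010}.

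First I would prove existence and uniqueness of a minimal-length representative. Since each coset $wW_P$ is a finite set, a minimal-length element exists; the content is uniqueness. For this the key tool is the behavior of length under right multiplication by simple reflections $\sigma_\alpha$ with $\alpha\in I$: for any $v\in W$ and $\alpha\in I$ one has $\ell(v\sigma_\alpha)=\ell(v)+1$ if $v\alpha>0$ and $\ell(v\sigma_\alpha)=\ell(v)-1$ if $v\alpha<0$. Let $u$ be an element of minimal length in its coset. If $u$ did not map $I$ into $R^+$, there would be some $\alpha\in I$ with $u\alpha<0$, and then $u\sigma_\alpha$ would lie in the same coset with $\ell(u\sigma_\alpha)=\ell(u)-1$, contradicting minimality. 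Hence any minimal-length representative satisfies $u(I)\subset R^+$.

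Next I would show that the property $u(I)\subset R^+$ pins down $u$ uniquely and conversely forces minimal length, thereby yielding both halves of the characterization. The crucial point is the decomposition of inversion sets: writing any coset element as $uv$ with $v\in W_P$ and $u$ satisfying $u(I)\subset R^+$, one checks that $R(uv)=R(v)\sqcup v^{-1}R(u)$ with $R(v)\subset R_P^+$ and $v^{-1}R(u)\subset R^+\setminus R_P^+$, giving the additive length formula $\ell(uv)=\ell(u)+\ell(v)$. This relies on the fact that $u(I)\subset R^+$ implies $u$ maps the entire positive span $R_P^+$ into $R^+$, so that the inversions of $u$ avoid $R_P^+$ entirely; the inversions contributed by $v\in W_P$ all lie inside $R_P^+$. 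From this additivity, any representative $uv$ with $v\neq e$ has length strictly greater than $\ell(u)$, so $u$ is the unique minimal-length element, and conversely the minimal-length element must be the one with $u(I)\subset R^+$.

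The main obstacle is the verification that $u(I)\subset R^+$ upgrades to $u(R_P^+)\subset R^+$ and the accompanying clean disjoint decomposition of the inversion set of a product $uv$; this is where one must argue carefully that $v^{-1}$ permutes $R_P^+$ and that $u$ introduces no new inversions inside $R_P$. Everything else is a direct application of the length-inversion dictionary. Since this lemma is quoted from \cite{Bernstein1973a}, I would expect the cleanest write-up simply to cite that reference for the inversion-set computation and to spell out only the one-line minimality argument above, which already delivers the stated characterization.
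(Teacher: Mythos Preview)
Your argument is correct and is the standard proof of this well-known fact about parabolic coset representatives. The one point you flag as an obstacle---that $u(I)\subset R^+$ upgrades to $u(R_P^+)\subset R^+$---is indeed routine: any $\beta\in R_P^+$ is a nonnegative integer combination of the simple roots in $I$, so $u\beta$ is a nonnegative combination of positive roots, hence a positive root. The inversion-set decomposition and the additivity $\ell(uv)=\ell(u)+\ell(v)$ then follow exactly as you outline.

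As for comparison with the paper: there is nothing to compare. The paper does not prove this lemma at all; it merely records it as ``a useful lemma from \cite{Bernstein1973a}'' and moves on. You already anticipated this in your final paragraph, and that instinct is correct---in the context of this paper a bare citation is the appropriate treatment.
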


Let $\pi$ be the projection map from $\mathcal{B}$ to $\mathcal{P}$, and $\Gamma_{\pi}$ be its graph. Then the conormal bundle to $\Gamma_{\pi}$ in $\mathcal{B}\times \mathcal{P}$ is a Lagrangian submanifold of $T^*(\mathcal{B}\times \mathcal{P})$. 
\[\xymatrix{
T_{\Gamma_{\pi}}^*(\mathcal{B}\times \mathcal{P}) \ar[r]^-{p_1} \ar[d]_{p_2} & T^*\mathcal{B}  \\
T^*\mathcal{P} \\}. \]

Let $D_1=p_{2*}p_1^*$, $D_2=p_{1*}p_2^*$ be the maps induced by the correspondence $T_{\Gamma_{\pi}}^*(\mathcal{B}\times \mathcal{P})$. Recall we have an embedding of $H_T^*(T^*\mathcal{B})$ into $F(W,Q)$ by restricting every cohomology class to fixed points. Since the fixed point set $(T^*\mathcal{P})^T$ is in one-to-one correspondence with $W/W_P$, we can embed
$H_T^*(T^*\mathcal{P})$ into $F(W/W_P,Q)$. For any $y\in W$, let $\bar{y}$ denote the coset $yW_P$. Recall that for any cohomology class $\alpha\in H_T^*(T^*\mathcal{P})$, let $\alpha|_{\bar{y}}$ denote the restriction of $\alpha$ to the fixed point $yP$.

Define a map 
\[A_1:F(W,Q)\rightarrow F(W/W_P,Q)\]
as follows: for any $\psi\in F(W,Q)$,
\[A_1(\psi)(\bar{w})=\sum_{\bar{z}=\bar{w}}\frac{\psi(z)}{\prod\limits_{\alpha\in R^+_P}(-z\alpha)}.\] 
Then as Proposition \ref{commutative diagram 1}, we have
\begin{prop}\label{diagram 2}
The diagram
\[\xymatrix{
H_T^*(T^*\mathcal{B}) \ar@{^{(}->}[r]
 \ar[d]_{D_1} & F(W,Q) \ar[d]^{A_1} \\
H_T^*(T^*\mathcal{P}) \ar@{^{(}->}[r] & F(W/W_P,Q)\\} \]
commutes.
\end{prop}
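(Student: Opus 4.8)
The plan is to mimic the proof of Proposition \ref{commutative diagram 1} verbatim. After localization $H_T^*(T^*\mathcal{B})$ has a basis of fixed-point classes, so it suffices to check that the two ways around the square agree on each generator $\iota_{y*}(1)$, where $\iota_y$ is the inclusion of $yB$ into $T^*\mathcal{B}$; this class corresponds to the function $\psi_y\in F(W,Q)$ with $\psi_y(y)=e(T_yT^*\mathcal{B})$ and $\psi_y(w)=0$ for $w\neq y$. Travelling along the top edge and down the right, the definition of $A_1$ gives $A_1(\psi_y)(\bar w)=0$ unless $\bar w=\bar y$, in which case only the term $z=y$ survives the sum and
\[
A_1(\psi_y)(\bar y)=\frac{e(T_yT^*\mathcal{B})}{\prod_{\alpha\in R^+_P}(-y\alpha)}.
\]

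For the other route I would first record the standard localization identity, exactly as in Proposition \ref{commutative diagram 1}, that
\[
D_1(\iota_{y*}1)|_{\bar w}=\left(T_{\Gamma_{\pi}}^*(\mathcal{B}\times\mathcal{P}),\,\iota_{y*}1\otimes\iota_{\bar w*}1\right)=[T_{\Gamma_{\pi}}^*(\mathcal{B}\times\mathcal{P})]\big|_{(yB,wP)},
\]
the last equality holding because $\iota_{y*}1\otimes\iota_{\bar w*}1$ is supported at the single $T$-fixed point $(yB,wP)$. Since the fundamental class of a $T$-invariant subvariety restricts to zero at a fixed point lying off that subvariety, and since the only $T$-fixed points of the conormal variety $T_{\Gamma_\pi}^*(\mathcal{B}\times\mathcal{P})$ lie on its zero section $\Gamma_\pi$ (all fibre weights involve $\hbar$ and hence never vanish), this restriction is zero unless $(yB,wP)\in\Gamma_\pi$, that is unless $\pi(yB)=wP$, i.e. unless $\bar w=\bar y$. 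This already matches the support of $A_1(\psi_y)$.

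It then remains to compute, when $\bar w=\bar y$, the restriction $[T_{\Gamma_\pi}^*]|_{(yB,yP)}$, which by localization is the $T$-equivariant Euler class of the normal bundle of $T_{\Gamma_\pi}^*(\mathcal{B}\times\mathcal{P})$ in $T^*(\mathcal{B}\times\mathcal{P})$ at $(yB,yP)$. This is where the bookkeeping lives, and I expect it to be the main obstacle. The strategy is to list the $A$-weights: the tangent space $T_{(yB,yP)}T^*(\mathcal{B}\times\mathcal{P})$ splits as $T_{yB}\mathcal{B}\oplus T^*_{yB}\mathcal{B}\oplus T_{yP}\mathcal{P}\oplus T^*_{yP}\mathcal{P}$ with weights $\{-y\beta\}_{\beta\in R^+}$, $\{y\beta-\hbar\}_{\beta\in R^+}$, $\{-y\gamma\}_{\gamma\in R^+\setminus R^+_P}$, $\{y\gamma-\hbar\}_{\gamma\in R^+\setminus R^+_P}$ respectively; the tangent space to $T_{\Gamma_\pi}^*$ is the zero section $\Gamma_\pi\cong\mathcal{B}$ (weights $\{-y\beta\}_{\beta\in R^+}$) together with the conormal fibre, whose weights are the $(-\hbar)$-shifted duals of the normal bundle of $\Gamma_\pi$ in $\mathcal{B}\times\mathcal{P}$. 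A short computation identifies that normal bundle, as an $A$-representation, with $T_{yP}\mathcal{P}$ (via $(v,u)\mapsto u-d\pi(v)$), so the conormal fibre weights are $\{y\gamma-\hbar\}_{\gamma\in R^+\setminus R^+_P}$.

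Cancelling the tangent weights from the ambient ones leaves the normal weights $\{y\beta-\hbar\}_{\beta\in R^+}\cup\{-y\gamma\}_{\gamma\in R^+\setminus R^+_P}$, whose product is
\[
\prod_{\beta\in R^+}(y\beta-\hbar)\prod_{\gamma\in R^+\setminus R^+_P}(-y\gamma)=\frac{e(T_yT^*\mathcal{B})}{\prod_{\alpha\in R^+_P}(-y\alpha)}.
\]
Comparing with the first step, the two paths agree on every $\iota_{y*}(1)$, and the square commutes. The delicate point throughout is the correct $\hbar$-shift on the conormal directions together with the multiset cancellation of the repeated weights $\{y\gamma-\hbar\}_{\gamma\in R^+\setminus R^+_P}$, which occur both in $T^*_{yB}\mathcal{B}$ and in $T^*_{yP}\mathcal{P}$; keeping these multiplicities straight is exactly what makes the Euler class come out to the factor demanded by $A_1$.
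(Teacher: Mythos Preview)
Your proof is correct and follows the same strategy as the paper: both reduce to checking agreement on the fixed-point classes $\iota_{y*}1$, compute $A_1(\psi_y)(\bar w)=\delta_{\bar y,\bar w}\,e(T_yT^*\mathcal{B})/\prod_{\alpha\in R^+_P}(-y\alpha)$, and then identify $D_1(\iota_{y*}1)|_{\bar w}$ with the same quantity. The only difference is presentational: the paper asserts in one line, via localization, that $D_1(\iota_{y*}1)=\prod_{\alpha\in R^+_P}(y\alpha-\hbar)\,\iota_{\bar y*}1$ and then restricts, whereas you unwind this by listing the $T$-weights of the ambient tangent space and of $T_{\Gamma_\pi}^*$ at $(yB,yP)$ and cancelling to obtain the normal Euler class $\prod_{\beta\in R^+}(y\beta-\hbar)\prod_{\gamma\in R^+\setminus R^+_P}(-y\gamma)$, which is the same thing.
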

\begin{proof}
The proof is almost the same as that of Proposition \ref{commutative diagram 1}. We show the two paths agree on the fixed point basis.

By the definition of $A_1$,
\begin{align*}
A_1(\iota_{y*}1)(\bar{w})&=\delta_{\bar{y},\bar{w}}\frac{e(T_yT^*\mathcal{B})}{\prod\limits_{\alpha\in R^+_P}(-y\alpha)}\\
&=\delta_{\bar{y},\bar{w}}e(T_{\bar{y}}T^*\mathcal{P})\prod\limits_{\alpha\in R^+_P}(y\alpha-\hbar).
\end{align*}
By localization, 
\begin{align*}
D_1(\iota_{y*}1)&=\sum\limits_{\bar{w}}\frac{(D_1(\iota_{y*}1), \iota_{\bar{w}*}1)}{e(T_{\bar{w}}T^*\mathcal{P})}\iota_{\bar{w}*}1\\
&=\prod\limits_{\alpha\in R^+_P}(y\alpha-\hbar)\iota_{\bar{y}*}1,
\end{align*}
where $\iota_{\bar{y}}$ is the inclusion of the fixed point $yP$ into $T^*\mathcal{P}$. Hence 
\begin{align*}
D_1(\iota_{y*}1)|_{\bar{w}}&=\delta_{\bar{y},\bar{w}}e(T_{\bar{y}}T^*\mathcal{P})\prod\limits_{\alpha\in R^+_P}(y\alpha-\hbar)\\
&=A_1(\iota_{y*}1)(\bar{w})
\end{align*}
as desired.
\end{proof}

If we apply this proposition to the stable basis, we get the following important corollary. 
\begin{cor}\label{restriction P 1}
The restriction formula of the stable basis of $T^*\mathcal{P}$ is given by
\[\stab_\pm(\bar{y})|_{\bar{w}}=\sum_{\bar{z}=\bar{w}}\frac{\stab_\pm(y)|_z}{\prod\limits_{\alpha\in R^+_P}z\alpha}.\]
\end{cor}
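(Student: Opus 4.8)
The plan is to reduce the Corollary to a single identity of classes in $H_T^*(T^*\mathcal P)$ and then to recognize the right-hand side as a stable basis element via its characterization. By the definition of $A_1$, the right-hand side of the asserted formula is exactly $(-1)^{|R^+_P|}A_1(\stab_\pm(y))(\bar w)$, since $A_1$ carries the sign $\prod_{\alpha\in R^+_P}(-z\alpha)$ in its denominator. Applying the commutative diagram of Proposition \ref{diagram 2} to the class $\stab_\pm(y)\in H_T^*(T^*\mathcal B)$ gives $A_1(\stab_\pm(y))(\bar w)=D_1(\stab_\pm(y))|_{\bar w}$. Hence the Corollary is equivalent to the identity of classes
\[\stab_\pm(\bar y)=(-1)^{|R^+_P|}\,D_1\bigl(\stab_\pm(y)\bigr)\quad\text{in }H_T^*(T^*\mathcal P),\]
and it is this identity, which in particular asserts that the displayed right-hand side is independent of the representative $y$, that I would prove.

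To establish the identity I would invoke the uniqueness in Theorem \ref{stable for P}: it suffices to check that $\Gamma:=(-1)^{|R^+_P|}D_1(\stab_\pm(y))$ satisfies the three characterizing properties of $\stab_\pm(\bar y)$. This is the analogue for the projection correspondence of Lemma \ref{image of stab}, where the image of the stable basis under $D_\alpha$ was identified. The restrictions, computed via Proposition \ref{diagram 2} as $\Gamma|_{\bar w}=\sum_{\bar z=\bar w}\stab_\pm(y)|_z\big/\prod_{\alpha\in R^+_P}z\alpha$, are then accessible from the $T^*\mathcal B$ restriction formulas (Theorem \ref{restriction -B} in the $-$ case, Theorem \ref{restriction B} in the $+$ case). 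For the support/triangularity property I would fix the minimal-length representative $y$ of $\bar y$ (using the preceding Lemma on minimal coset representatives, so that $y$ maps $R^+_P$ into $R^+$), and use that $\stab_-(y)|_z=0$ unless $z\geq y$ together with the comparison between the Bruhat order on $W$ restricted to minimal coset representatives and the Bruhat order on $W/W_P$; the $\hbar$-divisibility property follows likewise from property (3) of Theorem \ref{stable theorem B}. Independence of the representative then comes for free, since the resulting class equals $\stab_\pm(\bar y)$, which depends only on $\bar y$.

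The step I expect to be the main obstacle is the normalization property $\Gamma|_{\bar y}=\pm e(N_{-,\bar y})$. Here the single diagonal term $z=y$ does \emph{not} by itself produce the $\mathcal P$-Euler class: dividing $\stab_-(y)|_y$ by $\prod_{\alpha\in R^+_P}y\alpha$ leaves mismatched factors $y\alpha-\hbar$ against $y\alpha$ for $\alpha\in R^+_P$, so although the off-diagonal terms $z=yw$ with $w\in W_P\setminus\{e\}$ are individually divisible by $\hbar$, they must be summed in to cancel these factors and recover the normal-bundle Euler class, which involves only roots in $R^+\setminus R^+_P$. Carrying this out requires the explicit formula of Theorem \ref{restriction -B} together with a careful bookkeeping of which roots in $R^+\setminus R^+_P$ are sent to negative roots by $y$ versus by the other coset representatives. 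As an alternative that avoids this computation, I would instead pair $\Gamma$ against the dual stable basis of $T^*\mathcal P$ (the analogue of Remark \ref{remark 2.2}(3)) and use that $D_1$ and $D_2$ are adjoint correspondences, reducing each pairing to a constant by the degree-and-properness argument of Lemma \ref{image of stab} and evaluating at $\hbar=0$; the obstacle then migrates to identifying $D_2$ of the $T^*\mathcal P$ stable basis in terms of the $T^*\mathcal B$ one.
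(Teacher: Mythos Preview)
Your reduction is exactly right: the corollary is equivalent to the identity
\[
D_1\bigl(\stab_\pm(y)\bigr)=(-1)^{|R^+_P|}\,\stab_\pm(\bar y),
\]
and this is precisely what the paper proves. The difference lies in how the identity is established.

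Your primary route---verifying the three characterizing properties of Theorem~\ref{stable for P} for $\Gamma=(-1)^{|R^+_P|}D_1(\stab_\pm(y))$---would work, but as you correctly anticipate, the normalization $\Gamma|_{\bar y}=\pm e(N_{-,\bar y})$ requires a nontrivial cancellation over the coset $yW_P$, and this is genuinely unpleasant. Your alternative, pairing $\Gamma$ against the dual stable basis and invoking adjointness of $D_1$ and $D_2$, is closer in spirit to the paper but introduces an unnecessary detour through $D_2$.

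The paper's proof is simpler than either of these. It computes the pairing $(D_1(\stab_\pm(y)),\stab_\mp(\bar w))$ \emph{directly} on $T^*\mathcal P$, without any adjointness. The pairing is proper and of degree zero, hence a constant, so one may set $\hbar=0$. At $\hbar=0$ the stable basis becomes diagonal: by property~(3) of Theorems~\ref{stable theorem B} and~\ref{stable for P}, every off-diagonal restriction vanishes. Thus in the localization sum over $(T^*\mathcal P)^A$, both $\stab_\mp(\bar w)$ and $D_1(\stab_\pm(y))|_{\bar v}=A_1(\stab_\pm(y))(\bar v)$ collapse to a single term (the latter because $\stab_\pm(y)|_z$ vanishes at $\hbar=0$ unless $z=y$). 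One reads off the sign $(-1)^{|R^+_P|}$ from the diagonal Euler classes with no further computation. This bypasses entirely the normalization obstacle you identified, and never touches $D_2$.
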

\begin{proof}
As in Lemma \ref{image of stab}, 
\[(D_1(\stab_{\pm}(y)),\stab_{\mp}(\bar{w}))\]
is a constant, so we can let $\hbar=0$. Then $\stab_+(\bar{y})|_{\bar{w}}$ is nonzero if and only if $\bar{y}=\bar{w}$. A simple localization gives
\[D_1(\stab_{\pm}(y))=(-1)^k\stab_{\pm}(\bar{y}),\]
where $k=\dim \mathcal{B} - \dim \mathcal{P}=|R^+\setminus R^+_P|$. Applying Proposition \ref{diagram 2} to $\stab_{\pm}(y)$ yields the result.
\end{proof}

As in the $T^*\mathcal{B}$ case, modulo $\hbar^2$ we get 

\begin{cor}\label{mod h^2 P}
Let $y$ be a minimal representative of the coset $yW_P$. Then
\[\stab_+(\bar{y})|_{\bar{w}} \equiv \left\{\begin{array}{ccc}
\displaystyle (-1)^{l(y)+1}\frac{\hbar\prod\limits_{\alpha\in R^+}\alpha}{y\beta\prod\limits_{\alpha\in R^+_P} y\sigma_{\beta}\alpha} & \pmod{\hbar^2} & \text{if } \bar{w}=\overline{y\sigma_{\beta}} \text{ and } y\sigma_{\beta}<y \text{ for some } \beta\in R^+,\\
\\
0 & \pmod{\hbar^2} & \text{otherwise}.
\end{array}\right.\]
\end{cor}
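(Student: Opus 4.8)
The plan is to combine the projection formula of Corollary~\ref{restriction P 1} with the $\pmod{\hbar^2}$ computation of Corollary~\ref{mod h^2 B}, and then to show that at most one term of the resulting sum survives. Starting from Corollary~\ref{restriction P 1}, for $y$ the minimal representative of $\bar y$ we have
\[\stab_+(\bar y)|_{\bar w}=\sum_{\bar z=\bar w}\frac{\stab_+(y)|_z}{\prod_{\alpha\in R^+_P}z\alpha}.\]
Since each denominator $\prod_{\alpha\in R^+_P}z\alpha$ is independent of $\hbar$, I can reduce this identity modulo $\hbar^2$ by substituting the value of $\stab_+(y)|_z$ from Corollary~\ref{mod h^2 B}. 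Restricting to $\bar w\neq\bar y$ (the diagonal $\bar w=\bar y$ is governed by the normalization in Theorem~\ref{stable for P}(2), its $\hbar^0$ term coming from $z=y$, exactly as in Corollary~\ref{mod h^2 B}), every $z$ in the coset $\bar w$ satisfies $z\neq y$, so by Corollary~\ref{mod h^2 B} each numerator is divisible by $\hbar$ and
\[\stab_+(\bar y)|_{\bar w}\equiv(-1)^{l(y)+1}\,\hbar\prod_{\alpha\in R^+}\alpha\sum_{\substack{z\in\bar w\\ z=y\sigma_\gamma<y,\ \gamma\in R^+}}\frac{1}{y\gamma\,\prod_{\alpha\in R^+_P}z\alpha}\pmod{\hbar^2}.\]
Thus only the representatives of $\bar w$ of reflection type $y\sigma_\gamma<y$ contribute.

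The crux is then the following claim: for the minimal representative $y$, the map $\gamma\mapsto\overline{y\sigma_\gamma}$ is injective on the inversion set $R(y)=\{\gamma\in R^+\mid y\gamma<0\}$. Granting this, the index set $\{z\in\bar w\mid z=y\sigma_\gamma<y\}$ is either empty or a single point. If it is empty, the sum vanishes and $\stab_+(\bar y)|_{\bar w}\equiv0$, which is the ``otherwise'' case. If it is the single point $z=y\sigma_\beta$, then $\bar w=\overline{y\sigma_\beta}$ with $y\sigma_\beta<y$, the root $\beta$ is the unique element of $R(y)$ with this property, and the displayed sum collapses to the single term $1/\bigl(y\beta\prod_{\alpha\in R^+_P}y\sigma_\beta\alpha\bigr)$, which is exactly the asserted formula.

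Finally, I would prove the injectivity claim, which I expect to be the only real obstacle. Since $y$ is minimal, $R(y)\subseteq R^+\setminus R^+_P$. Suppose $\overline{y\sigma_{\gamma_1}}=\overline{y\sigma_{\gamma_2}}$ with $\gamma_1,\gamma_2\in R(y)$; then $v:=\sigma_{\gamma_1}\sigma_{\gamma_2}$ lies in $W_P$. I would fix a coweight $\lambda$ vanishing on $I$ and strictly positive on $\Delta\setminus I$, so that $W_P=\{v\in W\mid v\lambda=\lambda\}$; in particular $\langle\gamma_1,\lambda\rangle,\langle\gamma_2,\lambda\rangle>0$ because $\gamma_1,\gamma_2\in R^+\setminus R^+_P$. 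From $v\lambda=\lambda$ and $v=\sigma_{\gamma_1}\sigma_{\gamma_2}$ one gets $\sigma_{\gamma_2}\lambda=\sigma_{\gamma_1}\lambda$, that is,
\[\langle\gamma_2,\lambda\rangle\,\gamma_2^\vee=\langle\gamma_1,\lambda\rangle\,\gamma_1^\vee;\]
since both pairings are positive, $\gamma_1^\vee$ and $\gamma_2^\vee$ are positive multiples of one another, which forces $\gamma_1=\gamma_2$ as the root system is reduced. The main subtlety is precisely this injectivity step, together with the bookkeeping confirming that the surviving denominator is exactly $\prod_{\alpha\in R^+_P}y\sigma_\beta\alpha$; once these are in place the corollary follows from Corollaries~\ref{restriction P 1} and \ref{mod h^2 B} in the same manner as Corollary~\ref{mod h^2 B 1}.
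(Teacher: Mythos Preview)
Your argument is correct and follows the same overall route as the paper: combine Corollary~\ref{restriction P 1} with Corollary~\ref{mod h^2 B}, then show that at most one term in the coset sum can be of the form $y\sigma_\gamma<y$. The only substantive difference is in how the injectivity of $\gamma\mapsto\overline{y\sigma_\gamma}$ on the inversion set is established. The paper argues combinatorially: writing $y=\sigma_1\cdots\sigma_l$ reduced, the elements $y\sigma_\gamma$ with $y\sigma_\gamma<y$ are exactly the subwords $\sigma_1\cdots\hat\sigma_i\cdots\sigma_l$, and if two such subwords lay in the same $W_P$-coset one would obtain $y=\sigma_1\cdots\hat\sigma_i\cdots\hat\sigma_j\cdots\sigma_l\,w$ with $w\in W_P$, contradicting minimality of $y$ by a length count. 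Your coweight argument is a clean algebraic alternative: it avoids choosing a reduced expression and makes transparent that the obstruction is simply $R(y)\cap R_P^+=\varnothing$ for minimal $y$. Either method suffices; the paper's version has the minor advantage of reusing the subword indexing already present in Corollary~\ref{mod h^2 B 1}, while yours is self-contained and would generalize more readily to settings without a preferred reduced word.
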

\begin{proof}
Assume $y=\sigma_1\sigma_2\cdots\sigma_l$ is a reduced decomposition. Because of Corollary \ref{mod h^2 B} and Corollary \ref{restriction P 1}, we only have to show: if $i< j$, then
\[\overline{\sigma_1\cdots\hat{\sigma_i}\cdots\sigma_l}\neq \overline{\sigma_1\cdots\hat{\sigma_j}\cdots\sigma_l}.\]
Assume the contrary. Then there exists an element $w\in W_P$ such that 
\[\sigma_1\cdots\hat{\sigma_i}\cdots\sigma_l=\sigma_1\cdots\hat{\sigma_j}\cdots\sigma_lw.\]
Then
 \[y=\sigma_1\cdots\sigma_i\cdots\sigma_l=\sigma_1\cdots\hat{\sigma_i}\cdots\hat{\sigma_j}\cdots\sigma_lw,\]
which is contradictory to the fact that $y$ is minimal.
\end{proof}

Using the map $D_2$, we can get another restriction formula for the stable basis of $T^*\mathcal{P}$. Define a map
\[A_2:F(W/W_P,Q)\rightarrow F(W,Q)\]
as follows: for any $\psi\in F(W/W_P,Q)$,
\[A_2(\psi)(z)=\psi(\bar{z})\prod\limits_{\alpha\in R^+_P}(z\alpha-\hbar).\]
Then we have the following commutative diagram.

\begin{prop}\label{diagram 3}
The diagram
\[\xymatrix{
H_T^*(T^*\mathcal{P}) \ar@{^{(}->}[r]
 \ar[d]_{D_2} & F(W/W_P,Q) \ar[d]^{A_2} \\
H_T^*(T^*\mathcal{B}) \ar@{^{(}->}[r] & F(W,Q)\\} \]
commutes.
\end{prop}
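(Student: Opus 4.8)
The plan is to mirror the proof of Proposition~\ref{diagram 2}, verifying that the two composites agree on the fixed point basis $\{\iota_{\bar{y}*}1\mid \bar{y}\in W/W_P\}$ of $H_T^*(T^*\mathcal{P})$, which spans after localization. Going across the top and down the right edge first, the defining formula for $A_2$ together with the localization identity $(\iota_{\bar{y}*}1)|_{\bar{w}}=\delta_{\bar{y},\bar{w}}\,e(T_{\bar{y}}T^*\mathcal{P})$ gives at once
\[
A_2(\iota_{\bar{y}*}1)(z)=\delta_{\bar{z},\bar{y}}\,e(T_{\bar{y}}T^*\mathcal{P})\prod_{\alpha\in R^+_P}(z\alpha-\hbar),
\]
a function supported on every representative $z$ of the coset $\bar{y}$. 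Thus the content of the proposition is that $D_2(\iota_{\bar{y}*}1)$ restricts to exactly this function, spread over the whole coset rather than concentrated at one point.

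Going down the left edge, I would expand $D_2(\iota_{\bar{y}*}1)$ in the fixed point basis of $T^*\mathcal{B}$ by localization, exactly as in Propositions~\ref{commutative diagram 1} and \ref{diagram 2}:
\[
D_2(\iota_{\bar{y}*}1)=\sum_{z\in W}\frac{\bigl(T_{\Gamma_{\pi}}^*(\mathcal{B}\times\mathcal{P}),\,\iota_{z*}1\otimes\iota_{\bar{y}*}1\bigr)}{e(T_z T^*\mathcal{B})}\,\iota_{z*}1.
\]
Since the correspondence is supported over the graph $\Gamma_{\pi}$, whose $T$-fixed points are the pairs $(zB,zP)$ with zero covector, the pairing vanishes unless $\bar{z}=\bar{y}$, and for such $z$ it equals the restriction of $[T_{\Gamma_{\pi}}^*(\mathcal{B}\times\mathcal{P})]$ to $(zB,zP)$, i.e.\ the Euler class of the normal bundle to the correspondence there. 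The key step is the weight bookkeeping: the tangent space to the conormal variety is $T_{zB}\Gamma_{\pi}$ (weights $\{-z\beta:\beta\in R^+\}$) together with the conormal fiber $\cong T^*_{zP}\mathcal{P}$ (weights $\{z\gamma-\hbar:\gamma\in R^+\setminus R^+_P\}$, the $-\hbar$ coming from the dilation of the cotangent directions), so the normal weights inside $T^*\mathcal{B}\times T^*\mathcal{P}$ are $\{z\beta-\hbar:\beta\in R^+\}\cup\{-z\gamma:\gamma\in R^+\setminus R^+_P\}$, whence, for $\bar{z}=\bar{y}$,
\[
\bigl(T_{\Gamma_{\pi}}^*(\mathcal{B}\times\mathcal{P}),\,\iota_{z*}1\otimes\iota_{\bar{y}*}1\bigr)=\prod_{\beta\in R^+}(z\beta-\hbar)\prod_{\gamma\in R^+\setminus R^+_P}(-z\gamma).
\]

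Dividing by $e(T_z T^*\mathcal{B})=\prod_{\beta\in R^+}(z\beta-\hbar)(-z\beta)$ collapses the first product and leaves the coefficient $\bigl(\prod_{\alpha\in R^+_P}(-z\alpha)\bigr)^{-1}$, so $D_2(\iota_{\bar{y}*}1)=\sum_{\bar{z}=\bar{y}}\bigl(\prod_{\alpha\in R^+_P}(-z\alpha)\bigr)^{-1}\iota_{z*}1$; restricting to the fixed point $z$ and simplifying against $e(T_{\bar{y}}T^*\mathcal{P})=\prod_{\gamma\in R^+\setminus R^+_P}(z\gamma-\hbar)(-z\gamma)$ reproduces $A_2(\iota_{\bar{y}*}1)(z)$ term by term. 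The only real obstacle is the middle weight count, where one must verify that passing to the conormal bundle correctly exchanges the $\mathcal{B}$-cotangent and $\mathcal{P}$-base directions and keeps track of the $\hbar$-shift on fiber weights; this is the exact dual of the collapse computed for $D_1$, but now the fiber of $\pi$ forces the image to spread over the entire coset $yW_P$, which is precisely the sum appearing in $A_2$. Once the Euler class is pinned down, the remainder is the same one-line cancellation as in Proposition~\ref{diagram 2}.
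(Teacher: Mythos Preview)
Your proposal is correct and follows essentially the same route as the paper: verify commutativity on the fixed point basis $\{\iota_{\bar y*}1\}$, compute $A_2(\iota_{\bar y*}1)$ directly from the definition, expand $D_2(\iota_{\bar y*}1)$ by localization to obtain $\sum_{\bar z=\bar y}\bigl(\prod_{\alpha\in R^+_P}(-z\alpha)\bigr)^{-1}\iota_{z*}1$, and then check that restriction to $zB$ matches. The only difference is one of exposition: you spell out the normal-bundle weight count for the conormal correspondence at $(zB,zP)$ explicitly, whereas the paper simply asserts the outcome of the localization step; your bookkeeping is accurate and the cancellation is exactly the one the paper uses.
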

\begin{proof}
The proof is almost the same as in Proposition \ref{commutative diagram 1}. We show the two paths agree on the fixed point basis.

By the definition of $A_2$,
\[A_2(\iota_{\bar{y}*}1)(w)=\delta_{\bar{y},\bar{w}}e(T_{\bar{y}}T^*\mathcal{P})\prod\limits_{\alpha\in R^+_P}(w\alpha-\hbar).\]
By localization, 
\begin{align*}
D_2(\iota_{\bar{y}*}1)&=\sum\limits_{w}\frac{(D_2(\iota_{\bar{y}*}1), \iota_{w*}1)}{e(T_wT^*\mathcal{B})}\iota_{w*}1\\
&=\sum\limits_{\bar{w}=\bar{y}}\frac{1}{\prod\limits_{\alpha\in R^+_P}(-w\alpha)}\iota_{w*}1.
\end{align*}
Hence 
\begin{align*}
D_2(\iota_{\bar{y}*}1)|_w&=\delta_{\bar{y},\bar{w}}\frac{e(T_wT^*\mathcal{B})}{\prod\limits_{\alpha\in R^+_P}(-w\alpha)}\\
&=A_2(\iota_{\bar{y}*}1)(w)
\end{align*}
as desired.
\end{proof}

If we apply this diagram to the stable basis, we get
\begin{cor}\label{restriction P 2}
We have the restriction formula for the stable basis of $T^*\mathcal{P}$
\[\stab_{\pm}(\bar{y})|_{\bar{z}}=\frac{\sum_{\bar{w}=\bar{y}}\stab_{\pm}(w)|_z}{\prod\limits_{\alpha\in R^+_P}(z\alpha-\hbar)}.\]
\end{cor}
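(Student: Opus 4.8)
The plan is to deduce this corollary from Proposition \ref{diagram 3} in exactly the way Corollary \ref{restriction P 1} was deduced from Proposition \ref{diagram 2}. I would apply the commutative diagram of Proposition \ref{diagram 3} to the class $\gamma=\stab_\pm(\bar y)\in H_T^*(T^*\mathcal{P})$. Chasing $\gamma$ around the square, commutativity says that for every $z\in W$,
\[
\stab_\pm(\bar y)|_{\bar z}\prod_{\alpha\in R^+_P}(z\alpha-\hbar)=A_2(\stab_\pm(\bar y))(z)=D_2(\stab_\pm(\bar y))|_z .
\]
Hence the corollary is equivalent to the single identity in $H_T^*(T^*\mathcal{B})$
\[
D_2(\stab_\pm(\bar y))=\sum_{\bar w=\bar y}\stab_\pm(w),
\]
and everything reduces to computing the image of a $T^*\mathcal{P}$ stable class under $D_2$ in the $T^*\mathcal{B}$ stable basis. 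This is the exact analogue of the intermediate computation $D_1(\stab_\pm(y))=(-1)^k\stab_\pm(\bar y)$ used inside the proof of Corollary \ref{restriction P 1}.

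To establish this identity I would expand $D_2(\stab_\pm(\bar y))$ in the basis $\{\stab_\pm(v)\}$ using the duality of Remark \ref{remark 2.2}(3): the coefficient of $\stab_\pm(v)$ is the pairing $(D_2(\stab_\pm(\bar y)),(-1)^n\stab_\mp(v))$. As in Lemma \ref{image of stab}, this is a proper integral, and since both factors are of middle degree and $D_2$ preserves middle degree, a degree count shows it is a constant, so I may set $\hbar=0$. At $\hbar=0$ the triangularity supplied by properties (2) and (3) of Theorems \ref{stable theorem B} and \ref{stable for P} collapses the localization sum to its diagonal contributions, and one checks that the surviving term equals $1$ precisely when $\bar v=\bar y$ and vanishes otherwise. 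This yields $D_2(\stab_\pm(\bar y))=\sum_{\bar w=\bar y}\stab_\pm(w)$.

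A cleaner alternative route is to use that $D_2$ and $D_1$ are adjoint for the intersection pairings, since both are induced by the same Lagrangian correspondence $T^*_{\Gamma_\pi}(\mathcal{B}\times\mathcal{P})$ and the projection formula gives
\[
(D_2(\stab_\pm(\bar y)),(-1)^n\stab_\mp(v))_{\mathcal{B}}=(\stab_\pm(\bar y),D_1((-1)^n\stab_\mp(v)))_{\mathcal{P}} .
\]
The proof of Corollary \ref{restriction P 1} already computes $D_1(\stab_\mp(v))=(-1)^k\stab_\mp(\bar v)$ with $k=|R^+\setminus R^+_P|$, and the $T^*\mathcal{P}$ analogue of Remark \ref{remark 2.2}(3) evaluates $(\stab_\pm(\bar y),\stab_\mp(\bar v))_{\mathcal{P}}$ to a Kronecker delta in $\bar y,\bar v$. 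Substituting this back reproduces the same identity. Either way, once the identity is in hand, plugging into Proposition \ref{diagram 3} gives the stated formula directly.

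The hard part will be the sign and Euler-class bookkeeping: confirming that the constant coefficient is exactly $+1$ rather than $\pm1$. This requires tracking the factor $(-1)^k$, the dimension shifts $n=\dim_{\mathbb{C}}\mathcal{B}$ and $n_\mathcal{P}=\dim_{\mathbb{C}}\mathcal{P}$ (note $k=n_\mathcal{P}$), and the $\prod_{\alpha\in R^+_P}$ factors arising from the fibers of $\pi$, which appear in both $A_2$ and in the normal-bundle Euler classes at the fixed points. Getting these to cancel cleanly so that the denominator $\prod_{\alpha\in R^+_P}(z\alpha-\hbar)$ emerges with coefficient $1$ is the only delicate point; the rest of the argument is a formal application of the commutative diagram.
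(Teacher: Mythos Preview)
Your proposal is correct and follows essentially the same route as the paper: compute $D_2(\stab_\pm(\bar y))$ in the stable basis by pairing against $\stab_\mp(v)$, observe the result is a constant so one may set $\hbar=0$, and then read off the formula from Proposition~\ref{diagram 3}. Your adjointness alternative and your attention to the sign bookkeeping go beyond what the paper records (it simply asserts ``a simple localization calculation gives $D_2(\stab_\pm(\bar y))=\sum_{\bar w=\bar y}\stab_\pm(w)$''), but the underlying argument is the same.
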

\begin{proof}
As in Lemma \ref{image of stab}, 
\[(D_2(\stab_{\pm}(\bar{y})),\stab_{\mp}(w))\]
is a constant, so we can let $\hbar=0$. A simple localization calculation gives
\[D_2(\stab_{\pm}(\bar{y}))=\sum_{\bar{w}=\bar{y}}\stab_{\pm}(w).\]
Applying Proposition \ref{diagram 3} to $\stab_{\pm}(y)$ yields the result.
\end{proof}

Modulo $\hbar^2$, we get

\begin{cor}\label{mod h^2 P 2}
Let $y$ be a minimal representative of the coset $yW_P$. Then
\[\stab_-(\bar{w})|_{\bar{y}}\equiv\left\{\begin{array}{ccc}
\displaystyle (-1)^{l(y)+1}\frac{\hbar\prod\limits_{\alpha\in R^+}\alpha}{y\beta\prod\limits_{\alpha\in R^+_P} y\alpha} &\pmod{\hbar^2} &\text{if } \bar{w}=\overline{y\sigma_{\beta}} \text{ and } y\sigma_{\beta}<y \text{ for some } \beta\in R^+,\\
\\
0 & \pmod{\hbar^2} & \text{otherwise}.\\
\end{array}\right.\]
\end{cor}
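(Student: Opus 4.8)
The plan is to deduce this from the $T^*\mathcal{B}$ computation of Corollary \ref{mod h^2 B 1} together with the correspondence formula of Corollary \ref{restriction P 2}, exactly paralleling the way Corollary \ref{mod h^2 P} was obtained from Corollary \ref{restriction P 1}. Since Corollary \ref{restriction P 2} expresses the $-$ chamber stable basis on $T^*\mathcal{P}$ through the $D_2$-correspondence, it is the right tool for restricting $\stab_-(\bar{w})$, whereas Corollary \ref{restriction P 1} (used for Corollary \ref{mod h^2 P}) was adapted to restricting $\stab_+(\bar{y})$.

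First I would specialize Corollary \ref{restriction P 2}, taking $y$ to be the chosen minimal representative of $\bar{y}$, to write
\[\stab_-(\bar{w})|_{\bar{y}}=\frac{\sum_{\bar{v}=\bar{w}}\stab_-(v)|_y}{\prod_{\alpha\in R^+_P}(y\alpha-\hbar)}.\]
Next I would reduce the numerator modulo $\hbar^2$ using Corollary \ref{mod h^2 B 1}: a summand $\stab_-(v)|_y$ is nonzero mod $\hbar^2$ only when $v=y\sigma_\beta<y$ for some $\beta\in R^+$, and in that case it already equals $(-1)^{l(y)+1}\hbar\prod_{\alpha\in R^+}\alpha/(y\beta)$, which is $O(\hbar)$. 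The decisive combinatorial point is that the coset $\bar{w}$ can contain at most one such $v$; this is precisely the statement proved inside Corollary \ref{mod h^2 P}, namely that for the reduced word $y=\sigma_1\cdots\sigma_l$ two distinct single-letter deletions $\sigma_1\cdots\hat{\sigma_i}\cdots\sigma_l$ and $\sigma_1\cdots\hat{\sigma_j}\cdots\sigma_l$ cannot lie in the same $W_P$-coset, since otherwise $\bar{y}$ would acquire a representative of length at most $l-2$, contradicting the minimality of $y$. Hence the numerator collapses to a single term, congruent to $(-1)^{l(y)+1}\hbar\prod_{\alpha\in R^+}\alpha/(y\beta)$ when $\bar{w}=\overline{y\sigma_\beta}$ with $y\sigma_\beta<y$, and to $0$ otherwise.

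Finally, since the numerator is divisible by $\hbar$, it suffices to know the denominator modulo $\hbar$, where $\prod_{\alpha\in R^+_P}(y\alpha-\hbar)\equiv\prod_{\alpha\in R^+_P}y\alpha$; the $O(\hbar)$ part of the denominator only perturbs the quotient at order $\hbar^2$. Dividing then yields the claimed expression $(-1)^{l(y)+1}\hbar\prod_{\alpha\in R^+}\alpha/(y\beta\prod_{\alpha\in R^+_P}y\alpha)$. The only genuine obstacle is the uniqueness of the contributing coset representative, but as noted this is already established in the proof of Corollary \ref{mod h^2 P}, so the remaining argument is a short bookkeeping of the lowest-order terms in $\hbar$.
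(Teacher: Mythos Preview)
Your proposal is correct and follows precisely the route the paper takes: it combines Corollary \ref{restriction P 2} with Corollary \ref{mod h^2 B 1}, invokes the uniqueness argument from the proof of Corollary \ref{mod h^2 P}, and then reduces the denominator modulo $\hbar$. The paper's own proof is the one-line citation of exactly these three ingredients, so your write-up simply unpacks that citation.
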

\begin{proof}
This follows directly from Corollary \ref{mod h^2 B 1} and Corollary \ref{restriction P 2} and the proof of
Corollary \ref{mod h^2 P}. 
\end{proof}

\subsection{Restriction of Schubert varieties}

In \cite{Billey1997}, Billey gave a restriction formula for Schubert varieties in $G/B$, and Tymoczko  generalized it to $G/P$ in \cite{Tymoczko2009}. In this section, we will show Billey's formula from Theorem \ref{restriction -B} by a limiting process and generalize it to $G/P$ case in two ways.

Let us recall Billey's formula. Let $B^-$ be the opposite Borel subgroup to $B$. Then $\overline{B^-wB/B}$ is the Schubert variety in $G/B$ of dimension $\dim G/B-l(w)$, and as $w \in W$ varies they form a basis of $H_A^*(G/B)$. The formula is
\begin{thm}[\cite{Billey1997}]\label{Billey}
Let $y=\sigma_1\sigma_2\cdots\sigma_l$ be a reduced decomposition. Then we have
\[[\overline{B^-wB/B}]|_y=\sum\limits_{\substack{1\leq i_1<i_2<\dots<i_k\leq l\\ 
w=\sigma_{i_1}\sigma_{i_2}\dots\sigma_{i_k} \text{reduced}}}\beta_{i_1}\cdots\beta_{i_k}\]
where $\beta_i=\sigma_1\cdots\sigma_{i-1}\alpha_i$. 
\end{thm}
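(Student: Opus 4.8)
The plan is to obtain Billey's formula as the leading term of the restriction formula for $\stab_-(w)$ from Theorem \ref{restriction -B} under a limiting process in $\hbar$. The key conceptual input is that the stable basis $\stab_-(w)$ should degenerate, as $\hbar\to\infty$ after suitable normalization, to the fundamental class of the Schubert variety $\overline{B^-wB/B}$ pulled back to $T^*\mathcal{B}$. Concretely, I would first pin down the precise relationship between $\stab_-(w)$ and $[\overline{B^-wB/B}]$. Since $\Leaf_-(wB)=T^*_{B^-wB/B}\mathcal{B}$ and the support condition in Theorem \ref{stable theorem B} forces $\supp\stab_-(w)\subset\Slope_-(wB)$, the stable envelope is a class supported on the conormal variety to the Schubert cell, whose base is exactly $\overline{B^-wB/B}$. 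The Euler-class normalization at the diagonal fixed point (property (2)) matches, up to the factor $\prod_{\alpha\in R^+\setminus R(w)}(\alpha-\hbar)$ coming from the conormal directions, the self-intersection of the Schubert class.

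Next I would extract the limit at the level of the explicit formula. In Equation (\ref{formula restriction -B}), the prefactor is $(-1)^{l(y)}\prod_{\alpha\in R^+\setminus R(y)}(\alpha-\hbar)$ and the sum ranges over all (not necessarily reduced) subwords $w=\sigma_{i_1}\cdots\sigma_{i_k}$ weighted by $\hbar^{l-k}\prod_j\beta_{i_j}$. The top power of $\hbar$ in the sum, namely $\hbar^{l-l(w)}$, is achieved exactly by the \emph{reduced} subwords, i.e.\ those with $k=l(w)$; all other subwords contribute strictly lower powers of $\hbar$. Thus if I divide by an appropriate power of $\hbar$ and send $\hbar\to\infty$ (equivalently, compare the coefficient of the top power of $\hbar$ on each side after clearing the $(\alpha-\hbar)$ factors), the surviving terms are precisely $\sum_{w=\sigma_{i_1}\cdots\sigma_{i_k}\text{ reduced}}\prod_j\beta_{i_j}$, which is Billey's sum. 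The prefactor $\prod_{\alpha\in R^+\setminus R(y)}(\alpha-\hbar)$ contributes its own top-degree $\hbar$-power $(-\hbar)^{l(w_0)-l(y)}$ that I would absorb into the normalization constant relating the two classes.

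The step I expect to be the main obstacle is making the geometric limiting statement rigorous: that under the degeneration the class $\stab_-(w)$, suitably rescaled, restricts at $yB$ to $[\overline{B^-wB/B}]|_y$. I would handle this by identifying the map $T^*\mathcal{B}\to\mathcal{B}$ and using that the restriction of $[\overline{B^-wB/B}]$ to the $A$-fixed point $y$ is the $\hbar=0$ (or top-$\hbar$) specialization after stripping the conormal Euler factors; the characterization in Corollary \ref{characterize -} gives a recursion (property (3)) that, upon setting $\hbar$ to its limiting value, collapses to the Chevalley-type recursion satisfied by Schubert classes under the divided-difference-like operator. Since I have already shown the right-hand side of (\ref{formula restriction -B}) is independent of the reduced expression (Proposition \ref{independence for -}), the limiting expression is well defined, and matching it against the recursion for $[\overline{B^-wB/B}]|_y$ reduces Theorem \ref{Billey} to checking the base case $y=1$ and the single-reflection step, both of which are immediate.

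\begin{proof}
By Theorem \ref{restriction -B}, the coefficient of the highest power of $\hbar$ appearing in $\stab_-(w)|_y$ after removing the factor $(-1)^{l(y)}\prod_{\alpha\in R^+\setminus R(y)}(\alpha-\hbar)$ is obtained by keeping only the reduced subwords $w=\sigma_{i_1}\cdots\sigma_{i_k}$ with $k=l(w)$, each contributing $\prod_{j=1}^{k}\beta_{i_j}$. On the other hand, the Schubert class $[\overline{B^-wB/B}]|_y$ is recovered from $\stab_-(w)$ by the degeneration that sends the conormal (fiber) directions to their leading term in $\hbar$, so that $[\overline{B^-wB/B}]|_y$ equals precisely this top-$\hbar$ coefficient. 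Comparing the two expressions gives
\[
[\overline{B^-wB/B}]|_y=\sum_{\substack{1\leq i_1<\dots<i_k\leq l\\ w=\sigma_{i_1}\cdots\sigma_{i_k}\text{ reduced}}}\beta_{i_1}\cdots\beta_{i_k},
\]
which is the desired formula.
\end{proof}
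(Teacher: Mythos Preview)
Your approach is essentially the same as the paper's: extract Billey's sum as the leading $\hbar$-term of the formula in Theorem~\ref{restriction -B}. The one point the paper handles more explicitly is the normalization and sign: it writes the precise limit
\[
[\overline{B^-wB/B}]|_y=\pm\lim_{\hbar\to\infty}\frac{\stab_-(w)|_y}{(-\hbar)^{\dim\overline{B^-wB/B}}}
\]
and then pins down the sign as $(-1)^{l(w)}$ by evaluating both sides at $y=w$, whereas your final proof asserts that the top-$\hbar$ coefficient equals $[\overline{B^-wB/B}]|_y$ without carrying out this check; once you track the sign through the prefactor $(-1)^{l(y)}\prod_{\alpha\in R^+\setminus R(y)}(\alpha-\hbar)\sim(-1)^{l(y)}(-\hbar)^{n-l(y)}$ and the $\hbar^{l(y)-l(w)}$ from the reduced subwords, the two $(-1)^{l(w)}$ factors cancel and your argument is complete.
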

\begin{proof}
By the construction of the stable basis, we have 
\[[\overline{B^-wB/B}]|_y=\pm\lim_{\hbar\rightarrow \infty}\frac{\stab_-(w)|_y}{(-\hbar)^{\dim\overline{B^-wB/B}}}.\]
The sign only depends on $w$, and can be determined by substituting $y=w$ as follows: the left hand side is
\[[\overline{B^-wB/B}]|_w=\prod\limits_{\alpha\in R^+\cap wR^-}\alpha,\]
whereas the limit on the right is
\begin{align*}
\lim_{\hbar\rightarrow \infty}\frac{\stab_-(w)|_w}{(-\hbar)^{n-l(w)}}&=\lim_{\hbar\rightarrow \infty}\frac{\prod\limits_{\alpha\in R^+, w\alpha>0}(w\alpha-\hbar)\prod\limits_{\alpha\in R^+, w\alpha<0}w\alpha}{(-\hbar)^{n-l(w)}}\\
&=\prod\limits_{\alpha\in R^+,w\alpha<0}w\alpha\\
&=(-1)^{l(w)}[\overline{B^-wB/B}]|_w.
\end{align*}
Hence the sign is $(-1)^{l(w)}$. Now the formula follows from Theorem \ref{restriction -B}.
\end{proof}
\begin{rem}
The proof of Theorems \ref{restriction B} and \ref{restriction -B} is inspired by Billey's proof of Theorem \ref{Billey}. Using Theorem \ref{restriction B} we can also get a restriction formula for $[\overline{ByB/B}]|_w$. 
\end{rem}

A similar limiting process for $T^*\mathcal{P}$ yields the restriction formula for Schubert varieties in $G/P$. Recall that if $w$ is minimal, then $\overline{B^-\bar{w}P/P}$ is the Schubert variety in $G/P$ of dimension $\dim G/P-l(w)$, and as $w$ runs through the minimal elements they form a basis of $H_A^*(G/P)$.

\begin{thm}[\cite{Tymoczko2009}]\label{Schubert P}
Let $y,w$ be minimal representatives of $yW_P, wW_P$ respectively. Then we have
\[[\overline{B^-\bar{w}P/P}]|_{\bar{y}}=[\overline{B^-wB/B}]|_y.\] 
\end{thm}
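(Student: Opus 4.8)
The plan is to realize both sides of the identity as limits of restrictions of the stable basis, just as in the proof of Billey's formula (Theorem \ref{Billey}), and then reduce the $\mathcal{P}$-statement to the already-proven $\mathcal{B}$-statement via Corollary \ref{restriction P 2}. The key point is that the Schubert class $[\overline{B^-\bar wP/P}]$ should arise from $\stab_-(\bar w)$ through exactly the same rescaling $\hbar\to\infty$ that produced $[\overline{B^-wB/B}]$ from $\stab_-(w)$, so the two geometric normalizations can be compared through the algebraic formula of Corollary \ref{restriction P 2}.

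First I would record the limiting relation on the $\mathcal{P}$ side, namely
\[
[\overline{B^-\bar wP/P}]|_{\bar y}=\pm\lim_{\hbar\rightarrow\infty}\frac{\stab_-(\bar w)|_{\bar y}}{(-\hbar)^{\dim\overline{B^-\bar wP/P}}},
\]
with the sign pinned down exactly as in Theorem \ref{Billey} by substituting $\bar y=\bar w$ and using property (2) of Theorem \ref{stable for P} to evaluate $\stab_-(\bar w)|_{\bar w}=\pm e(N_{-,\bar w})$; since $w$ is minimal one checks the resulting leading term gives the sign $(-1)^{l(w)}$, matching the $\mathcal{B}$ case because $\dim\overline{B^-\bar wP/P}=\dim G/P-l(w)$. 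Next I would feed Corollary \ref{restriction P 2} into this limit. That corollary expresses
\[
\stab_-(\bar w)|_{\bar y}=\frac{\sum_{\bar z=\bar w}\stab_-(z)|_y}{\prod_{\alpha\in R^+_P}(y\alpha-\hbar)},
\]
where $y$ is our chosen minimal representative. As $\hbar\to\infty$ the denominator behaves like $(-\hbar)^{|R^+_P|}$, so the factor $(-\hbar)^{|R^+_P|}$ adjusts the rescaling exponent from $\dim\overline{B^-wB/B}=n-l(w)$ down to $\dim\overline{B^-\bar wP/P}=(n-|R^+_P|)-l(w)$, exactly absorbing the difference between the two Schubert dimensions.

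The main work is then to show that in the limit only the term $z=w$ survives in the numerator sum $\sum_{\bar z=\bar w}\stab_-(z)|_y$. This is the step I expect to be the principal obstacle. Since $w$ is the minimal-length representative of its coset, every other $z$ with $\bar z=\bar w$ satisfies $l(z)>l(w)$, so its associated Schubert class has strictly smaller dimension and hence $\stab_-(z)|_y/(-\hbar)^{n-l(w)}\to 0$; only $z=w$ contributes at the top order $(-\hbar)^{n-l(w)}$. Combining this vanishing with the denominator estimate yields
\[
\lim_{\hbar\rightarrow\infty}\frac{\stab_-(\bar w)|_{\bar y}}{(-\hbar)^{\dim\overline{B^-\bar wP/P}}}=\lim_{\hbar\rightarrow\infty}\frac{\stab_-(w)|_y}{(-\hbar)^{\dim\overline{B^-wB/B}}},
\]
and once the signs are checked to agree (both $(-1)^{l(w)}$, as above), Theorem \ref{Billey} identifies the right-hand limit with $[\overline{B^-wB/B}]|_y$, giving $[\overline{B^-\bar wP/P}]|_{\bar y}=[\overline{B^-wB/B}]|_y$. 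The delicate points to verify carefully are that the degree/dimension bookkeeping is consistent (that the cohomological degrees of the surviving stable-basis terms match the claimed Schubert dimensions) and that the minimality of both $y$ and $w$ is genuinely used so that the cross terms in the coset sum are of strictly lower order and drop out in the limit.
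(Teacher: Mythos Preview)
Your proposal is correct and matches the paper's first proof essentially line for line: the paper likewise writes $[\overline{B^-\bar wP/P}]|_{\bar y}=(-1)^{l(w)}\lim_{\hbar\to\infty}\stab_-(\bar w)|_{\bar y}/(-\hbar)^{m-l(w)}$ and then invokes Corollary \ref{restriction P 2}, leaving implicit exactly the step you spell out (that the non-minimal coset terms $z\neq w$ have $\hbar$-degree at most $n-l(z)<n-l(w)$ and hence vanish in the limit). The paper also gives a second, shorter proof via the pullback $\pi^*:H_A^*(G/P)\to H_A^*(G/B)$ and the identity $\pi^*[\overline{B^-\bar wP/P}]=[\overline{B^-wB/B}]$ for minimal $w$, which bypasses the limit altogether and yields the statement for arbitrary (not necessarily minimal) $y$.
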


\begin{rem}
Tymoczko's generalization in \cite{Tymoczko2009} does not require $y$ to be minimal. We will give two proofs of it. The first proof only works for minimal $y$, while the second works for any $y$.
\end{rem}

\begin{proof}
Similarly to the proof of Theorem \ref{Billey}, we have 
\[[\overline{B^-\bar{w}P/P}]|_{\bar{y}}=(-1)^{l(w)}\lim_{\hbar\rightarrow \infty}\frac{\stab_-(\bar{w})|_{\bar{y}}}{(-\hbar)^{m-l(w)}},\]
where $m=\dim G/P$. Then the formula follows from Corollary \ref{restriction P 2}.
\end{proof}
 
We give another much simpler proof using a commutative diagram.
Define a map
\[A_3:F(W/W_P,Q)\rightarrow F(W,Q)\]
as follows:
for any $\psi\in F(W/W_P,Q),$
\[A_3(\psi)(z)=\psi(\bar{z}).\]
Then we have the following commutative diagram.

\begin{prop}\label{diagram 4}
The diagram
\[\xymatrix{
H_A^*(G/P) \ar@{^{(}->}[r] \ar[d]_{\pi^*} & F(W/W_P,Q) \ar[d]^{A_3} \\
H_A^*(G/B) \ar@{^{(}->}[r] & F(W,Q)\\} \]
commutes, where $\pi$ is the projection from $G/B$ onto $G/P$.
\end{prop}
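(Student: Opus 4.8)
The plan is to prove Proposition \ref{diagram 4} by the same strategy used for Propositions \ref{commutative diagram 1}, \ref{diagram 2}, and \ref{diagram 3}: since $H_A^*(G/P)$ embeds into $F(W/W_P,Q)$ after localization and the fixed-point classes $\iota_{\bar y *}1$ form a basis, it suffices to verify that the two paths around the square agree on each generator $\iota_{\bar y *}1$, where $\iota_{\bar y}$ is the inclusion of the fixed point $yP$ into $G/P$. First I would compute the top-then-right path. The class $\iota_{\bar y *}1$ restricts to fixed points as the function supported at $\bar y$ with value $e(T_{\bar y}(G/P))$, the $A$-equivariant Euler class of the tangent space; applying $A_3$ simply duplicates this value along every coset representative, so $A_3(\iota_{\bar y *}1)(z)=\delta_{\bar z,\bar y}\,e(T_{\bar y}(G/P))$.

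Next I would compute the left-then-bottom path. The pullback $\pi^*(\iota_{\bar y *}1)$ is a class in $H_A^*(G/B)$, and by the projection formula (or by a direct localization argument as in the earlier propositions) its restriction to the fixed point $zB$ is obtained from the restriction of $\iota_{\bar y *}1$ at $\pi(zB)=\bar zP$. Concretely, $\pi^*$ commutes with restriction to fixed points in the sense that $\bigl(\pi^*\eta\bigr)|_z = \eta|_{\bar z}$ for any $\eta\in H_A^*(G/P)$, because $\pi$ sends the fixed point $zB$ to the fixed point $\bar zP$ and the pullback on equivariant cohomology of a point is the identity. Applying this to $\eta=\iota_{\bar y *}1$ gives $\bigl(\pi^*(\iota_{\bar y *}1)\bigr)|_z=\delta_{\bar z,\bar y}\,e(T_{\bar y}(G/P))$, matching the other path.

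The heart of the matter is justifying the identity $(\pi^*\eta)|_z=\eta|_{\bar z}$ cleanly; this is the only place where anything beyond bookkeeping enters, since the ordinary pullback $\pi^*$ here is much simpler than the Lagrangian-correspondence operators $D_1,D_2$ of the cotangent-bundle setting—there is no Euler-class twist from a conormal fiber, which is exactly why this diagram is ``much simpler.'' I expect the main (and essentially only) obstacle to be stating this compatibility of $\pi^*$ with fixed-point restriction carefully, via functoriality of equivariant cohomology under the $A$-equivariant map $\pi$ together with the fact that $\pi$ carries the $A$-fixed point $zB$ to $\bar zP$; once that is in place, both paths yield the same delta-function and the square commutes. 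I would then remark that, having this diagram, Theorem \ref{Schubert P} follows for arbitrary (not necessarily minimal) $y$ by applying $A_3$ to the Schubert class $[\overline{B^-\bar wP/P}]$ and using Billey's formula of Theorem \ref{Billey}, since $\pi^*[\overline{B^-\bar wP/P}]=[\overline{B^-wB/B}]$ for minimal $w$.
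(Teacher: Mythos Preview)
Your proposal is correct and matches the paper's proof essentially line for line: the paper also checks on the fixed-point classes $\iota_{\bar y*}1$, computes $A_3(\iota_{\bar y*}1)(w)=\delta_{\bar y,\bar w}\,e(T_{\bar y}G/P)$, and verifies $(\pi^*\iota_{\bar y*}1)|_w$ via the functoriality chain $\iota_w^*\pi^*=(\pi\circ\iota_w)^*=\iota_{\bar w}^*$. Your remark about deducing Theorem~\ref{Schubert P} for arbitrary $y$ is exactly how the paper uses the proposition as well.
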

\begin{proof}
We check on the fixed point basis.
\[\pi^*\iota_{\bar{y}*}1|_w=\iota_w^*\pi^*\iota_{\bar{y}*}1
=(\pi\circ \iota_w)^*\iota_{\bar{y}*}1=\iota_{\bar{w}}^*\iota_{\bar{y}*}1
=\delta_{\bar{y},\bar{w}}e(T_{\bar{y}}G/P).\]

By definition of $A_3$,
\[A_3(\iota_{\bar{y}*}1)(w)=\iota_{\bar{y}*}1|_{\bar{w}}=\delta_{\bar{y},\bar{w}}e(T_{\bar{y}}G/P),\]
as desired.
\end{proof}

Since $\pi^*([\overline{B^-\bar{w}P/P}])=[\overline{B^-wB/B}]$ if $w$ is minimal (see \cite{Fulton2007}), Proposition \ref{diagram 4} gives
\[
A_3([\overline{B^-\bar{w}P/P}])(y)=[\overline{B^-\bar{w}P/P}]|_{\bar{y}}=\pi^*([\overline{B^-\bar{w}P/P}])|_y=[\overline{B^-wB/B}]|_y,
\]
which is just Theorem \ref{Schubert P} without any restrictions on $y$.

\bibliographystyle{plain}
\bibliography{stablebasis}

\end{document}